\documentclass[leqno,a4paper,12pt]{article}
\pdfpageheight 297mm   
\pdfpagewidth  210mm    
\topmargin -1cm	
\evensidemargin 0in 
\oddsidemargin 0in	
\headheight 0.0in	
\textheight  241mm      
\textwidth   159mm	
\parindent .5cm



\usepackage{setspace} 
\title{Additive average {S}chwarz with adaptive coarse spaces: scalable algorithms for multiscale problems 
}
\author{Leszek Marcinkowski\footnotemark[2]
        \and Talal Rahman\footnotemark[3]}

\usepackage{srcltx}

\usepackage{graphicx}
\usepackage{epstopdf}
\usepackage{labelfig}

\usepackage{latexsym}

\usepackage{slashbox} 

\usepackage{amsmath}
\usepackage{amssymb}

\usepackage{amsfonts}

\usepackage{lineno} 
\usepackage{hyperref}

\newenvironment{keywords}{\textbf{Keywords:}}{\\}
\newenvironment{AMS}{\textbf{AMS:}}{\\}
\newenvironment{proof}{\textbf{Proof:}}{$\Box$}
\newtheorem{remark}{Remark}
\newtheorem{theorem}{Theorem}
\newtheorem{lemma}{Lemma}

\date{}

\begin{document}
\maketitle

\renewcommand{\thefootnote}{\fnsymbol{footnote}}

\footnotetext[2]{Faculty of Mathematics, Univeristy of Warsaw, Banacha 2, 02-097 Warszawa, Poland ({\tt Leszek.Marcinkowski@mimuw.edu.pl})}
\footnotetext[3]{Department of Computing, Mathematics and Physics, Western Norway University of Applied Sciences, Inndals\-veien 28, 5063 Bergen, Norway ({\tt Talal.Rahman@hvl.no})}

\begin{abstract}
We present an analysis of the additive average Schwarz preconditioner with two newly proposed adaptively enriched coarse spaces which was presented at the 23rd International conference on domain decomposition methods in Korea, for solving second order elliptic problems with highly varying and discontinuous coefficients. It is shown that the condition number of the preconditioned system is bounded independently of the variations and the jumps in the coefficient, and depends linearly on the mesh parameter ratio H/h, that is the ratio between the subdomain size and the mesh size, thereby retaining the same optimality and scalablity of the original additive average Schwarz preconditioner.
\end{abstract}

\begin{keywords}
Domain decomposition preconditioner, additive average Schwarz method, adaptive coarse space, multiscale finite element 
\end{keywords}

\begin{AMS}
65N55, 65N30, 65N22, 65F08
\end{AMS}


\section{Introduction}

Additive Schwarz methods are considered among the most effective preconditioners for solving algebraic systems arising from the discretization of elliptic partial differential equations, because they generate algorithms that are easy to implement, inherently parallel, scalable and fast. With proper enrichment of the coarse spaces, the methodology has recently been quite successfully applied to multiscale problems with highly heterogeneous and varying coefficients, a class of problems which most standard iterative solvers have difficulty to solve efficiently. Additive average Schwarz is one of the simplest of all additive Schwarz preconditioners because it is easy to construct and quite straightforward to analyze. Unlike most additive Schwarz preconditioners, its local subspaces are defined on non-overlapping subdomains, and no explicit coarse grid is required as the coarse space is simply defined as the range of an averaging operator. By enriching its coarse space with functions corresponding to the bad eigen modes of the local stiffness it has been shown numerically in a recent presentation, cf. \cite{Marcinkowski:2017:MCSAAS}, that the method can be made both scalable and robust with respect to any variation and jump in the coefficient when solving multiscale problems. The purpose of this paper is to give a complete analysis of the method presented in the paper.                     

The additive average Schwarz method in its original form, was first introduced for the second order elliptic problems in \cite{Bjorstad:1997:ASM}, where the method was applied to and analyzed for problems with constant coefficient in each subdomain and jumps only across subdomain boundaries. The method was further extended to non-matching grids using the mortar dicretization in \cite{Bjorstad:2003:ASM,Rahman:2005:ASM,Rahman:2011:SPCG}, and to fourth order problems in \cite{Feng:2002:AAS}.
For multiscale problems where the coefficient may be highly varying and discontinuous also inside subdomains, the method has been analyzed in \cite{Dryja:2010:AAS}, where it has been shown that the condition number of the preconditioned system depends linearly on the jump of coefficient in the subdomains' layers, and quadratically on the ratio of the coarse to the fine mesh parameters. The method has very recently been extended to the Crouzeix-Raviart finite volume discretization, cf. \cite{Loneland:2016:AAS,Loneland:2016:AASM}, showing similar results. All these results on the additive average Schwarz method however suggest that the method by itself can not be robust for multiscale problems unless some form of enrichment of the coarse space is made, which eventually led the research to the approach newly presented in the 23rd international conference on domain decomposition, cf. \cite{Marcinkowski:2017:MCSAAS}, where adaptively chosen eigenfunctions of certain local eigenvalue problems, extended by zero to the rest of the domain, are added to the standard average Schwarz coarse space.  
This idea of enriching the coarse space with eigenfunctions for improved convergence goes back several years, e.g. the paper \cite{Bjorstad-Koster-Krzyzanowski:HPC:2002,Bjorstad-Krzyzanowski:PPAM2001} on a substructuring domain decomposition method and \cite{Chartier:2003:SAMGe} on an algebraic multigrid method. Using the idea to solve multiscale problems started only very recently, with the papers \cite{Galvis:2010:DDM,Galvis:2010:DDMR,Nataf:2011:CSC}. Since then, a number of other works have emerged proposing algorithms based on solving different eigenvalue problems, see e.g. \cite{chung2016adaptive,Dolean:2012:ATL,Efendiev:2012:RTD,Efendiev:2012:RDD,atle2015harmonic,Spilane:2014:ARC} for those using the additive Schwarz framework for their algorithms, and \cite{kim2015bddc,KRR:2015:FDMACS,klawonn2016comparison,mandel2007adaptive,sousedik2013adaptive,Calvo:2016:ACP,kim2016bddc,klawonn:2016:adaptive,oh:2016:BDDC,pechstein2016unified} for those using the FETI-DP or the BDDC framework for their algorithms.
 
Throughout this paper, we use the following notations: $x\lesssim y$ 
and $w\gtrsim z$ denote that there exist positive constants $c$ and $C$ independent of mesh parameters and the jump of coefficients such that $x\leq c y$ and $w\geq C z$, respectively.

The remainder of the paper is organized as follows: we state the discrete problem in Section~\ref{sec:problem}, the additive Schwarz method in Section~\ref{sec:schwarz-met}, and the coarse spaces in Section~\ref{sec:coarse-space}. In Section~\ref{sec:cond-bound} the condition number bound is given and proved. Some numerical results are then presented in Section~\ref{sec:num_exp}.

\section{Discrete problems}\label{sec:problem}
We consider a model multiscale elliptic problem on a polygonal domain $\Omega$ in the 2D.  
We seek $u^*\in H^1_0(\Omega)$ such that
\begin{equation}\label{eq:discrete-pr}
 a(u^*,v)=\int_\Omega f v \: d x \qquad \forall v \in H^1_0(\Omega),
\end{equation}
where 
\begin{equation*}
 a(u,v)=\int_\Omega \alpha(x) \nabla u \nabla v \: d x
\end{equation*}
$f\in L^2(\Omega)$, and $\alpha \in L^\infty(\Omega)$ is the positive coefficient function.
We assume that there exists an $\alpha_0>0$ such that $\alpha(x) \geq \alpha_0$ in $\Omega$. Since we can scale the problem by $\alpha_0^{-1}$, we can further assume that $\alpha_0=1$.

We introduce the triangulation $T_h(\Omega)=T_h=\{\tau\}$ consisting of the triangles $\tau$, and assume that this triangulation is quasi-uniform in the sense of \cite{Braess:2007:FE,Brenner:2008:MTF}. Let $V^h$ be the discrete finite element space consisting of continuous piecewise linear functions with zero on the boundary $\partial\Omega$, i.e. 
\begin{equation}
 V^h=\{v\in C(\overline{\Omega}): v_{|\tau} \in P_1(\tau) \ \ \forall \tau \in T_h, \ \ v=0 \ \ \partial\Omega\},
\end{equation}
where $P_1(\tau)$ is the space of linear polynomials over the triangle $\tau\in T_h$.
Each $v\in V^h$ can be represented in the standard nodal basis as $v=\sum_{x\in N_h} v(x)\phi_x$, where $N_h$ is set of the nodal points, i.e. all vertices of triangles in $T_h$ which are not on $\partial\Omega$. 

The corresponding discrete problem is then to find $u^*_h \in V^h$ such that 
\begin{equation}
 a(u_h^*,v)=\int_\Omega f v \: d x \qquad \forall v \in V^h.
\end{equation}
The Lax-Milgram theorem yields that this problem has a unique solution.


Note that for $u,v\in V^h$ their gradients are constant over each triangle $\tau\in T_h$. Thus we see that 
$\int_\tau \alpha(x)\nabla u \nabla v \: dx = (\nabla u_{|\tau} ) (\nabla v_{|\tau} )\int_\tau \alpha(x)\: dx$.
Hence without any loss of generality we can assume that  $\alpha$ is piecewise constant over the triangles of the triangulation $T_h$.

Further we assume that we have a coarse partition  of $\Omega$ into open connected polygonal subdomains $\{\Omega_k\}_{k=1}^N$ such that 
$\overline{\Omega}=\bigcup_{k=1}^N \overline{\Omega}_k$, where each 
$\overline{\Omega}_k$ is a sum of some closed triangles of $T_h$. 
Let $H=\mathrm{diam}(\Omega_k)$ be the coarse mesh parameter, and $\Gamma$ the interface defined as $\Gamma=\bigcup_{k=1}^N \partial\Omega_k \setminus \partial\Omega$.


\begin{figure}[htb]
\centerline{ 
\SetLabels
\L\B (.45*.53)  \boldmath{${\Omega_k}$} \\
\L\B (.60*.01)  \boldmath{${\Omega_{k}^{\delta}}$} \\
\endSetLabels
\AffixLabels{
\includegraphics[width=0.32\textheight]{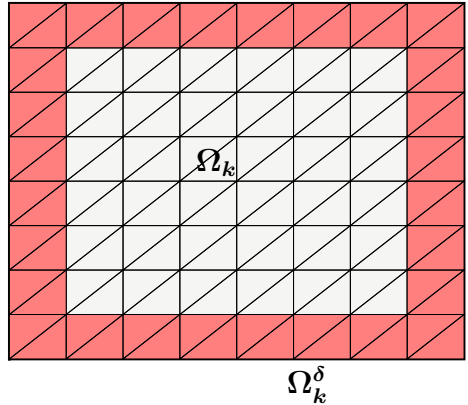}
} 
} 
\caption{$\Omega_{k}^{\delta}$ is the layer corresponding to the subdomain $\Omega_k$, and consisting of elements (triangles) of $T_h(\Omega_k)$ touching the subdomain boundary $\partial\Omega_k$.}
\label{fig:layer}
\end{figure}

Each subdomain inherits its triangulation $T_h(\Omega_k)=\{\tau\in T_h: \tau \subset \Omega_k\}$ from the $T_h(\Omega)$. Consequently, we define the local finite element space $V^h(\Omega_k)$ as the space of functions of $V^h$, restricted to $\overline{\Omega}_k$, and 
\begin{equation} \label{eq:loc_space_zero_bnd}
 V^h_0(\Omega_k)=V^h(\Omega_k)\cap H^1_0(\Omega_k).
\end{equation}
Let $\Omega_k^\delta\subset \Omega_k, k=1,\ldots,N$, be the open discrete layers, where each $\Omega_k^\delta$ is defined as the interior of the sum of all closed triangles $\tau \in T_h(\Omega_k)$
such that $\partial\tau\cap \partial\Omega_k\not= \emptyset$, cf. Figure~\ref{fig:layer}.
We introduce the local maximums and minimums of coefficients over a subdomain and its layer, as
\begin{equation}\label{eq:coeff-min-max}
\begin{array}{lcrlcr}
 \underline{\alpha}_k&:=&\min_{x\in \Omega_k}\alpha,&
  \qquad  \overline{\alpha}_k&:=&\max_{x\in \Omega_k}\alpha, \\
  \underline{\alpha}_{k,\delta}&:=&\min_{x\in \Omega_k^\delta}\alpha,& \qquad 
    \overline{\alpha}_{k,\delta}&:=&\max_{x\in \Omega_k^\delta}\alpha. 
\end{array}
\end{equation}
Let also $\Omega_h,\partial\Omega_h,\Omega_{k,h},\partial \Omega_{k,h}$, and $\Gamma_h$ denote
the sets of nodal points which are on $\Omega,\partial\Omega,\Omega_k,\partial \Omega_k$, and $\Gamma$, respectively.

\section{Additive Schwarz method}\label{sec:schwarz-met}
The method (cf. \cite{Marcinkowski:2017:MCSAAS}) is constructed based on the abstract scheme of the additive Schwarz method (ASM), cf. e.g. \cite{Smith:1996:DDP,Toselli:2005:DDM}. Accordingly, for each local subproblem, $V_k \subset V^h$ the subspace corresponding to the subdomain $\Omega_k$, is defined as the space of functions of $V_0^h(\Omega_k)$ extended by zero to the rest of $\Omega$, i.e.
\begin{equation*}
 V_k=\{u\in V^h: u(x)=0 \quad x\not \in \Omega_k\}, \quad k=1,\ldots,N.
\end{equation*}
For the coarse problem we propose two different coarse spaces for the Schwarz method, $V_0^{\scalebox{0.5}{TYPE}}\subset V^h$ where $\scalebox{0.8}{TYPE}$ is either $\scalebox{0.8}{LAYER}$ or $\scalebox{0.8}{SUBD}$, defined later in Section~\ref{sec:coarse-space}, cf. (\ref{eq:coarse_space}).
The corresponding projections $P_k:V^h\rightarrow V_k$, are defined as
\begin{equation}
 a(P_ku,v)=a(u,v) \qquad \forall v\in V_k \qquad k=1,\ldots,N
\end{equation}
and $P_0^{\scalebox{0.5}{TYPE}}:V^h\rightarrow V_0^{\scalebox{0.5}{TYPE}}$, as
\begin{equation}
 a(P_0^{\scalebox{0.5}{TYPE}}u,v)=a(u,v) \qquad \forall v\in V_0^{\scalebox{0.5}{TYPE}}, \qquad \scalebox{0.8}{TYPE}\in\{\scalebox{0.8}{LAYER,SUBD}\}. 
\end{equation}
Now following the Schwarz scheme, and writing the additive Schwarz operator $P^{\scalebox{0.5}{TYPE}}:V^h\rightarrow V^h$ as
\begin{equation}\label{eq:ASM-oper}
 P^{\scalebox{0.5}{TYPE}}u=P_0^{\scalebox{0.5}{TYPE}}u + \sum_{k=1}^N P_k u \qquad  \scalebox{0.8}{TYPE}\in\{\scalebox{0.8}{LAYER,SUBD}\}.
\end{equation}
we can replace the original problem (\ref{eq:discrete-pr}) with the following problem, cf. e.g. \cite{Smith:1996:DDP,Toselli:2005:DDM}:
\begin{equation}\label{eq:ASMprob}
  P^{\scalebox{0.5}{TYPE}}u^*_h=g^{\scalebox{0.5}{TYPE}} \qquad  \scalebox{0.8}{TYPE}\in\{\scalebox{0.8}{LAYER,SUBD}\}.
\end{equation}
where $g^{\scalebox{0.5}{TYPE}}=g_0^{\scalebox{0.5}{TYPE}}u+\sum_k g_k$ with $g_0^{\scalebox{0.5}{TYPE}}=P_0^{\scalebox{0.5}{TYPE}}u_h^*$ and $g_k=P_ku_h^*$.
The function on the right hand side of (\ref{eq:ASMprob}) can be computed without knowing $u^*_h$, cf. \cite{Smith:1996:DDP,Toselli:2005:DDM}. The condition number bounds for $ P^{\scalebox{0.5}{TYPE}}$ are given in Theorem~\ref{thm:cond-bound} in Section~\ref{sec:cond-bound}.

\section{Coarse spaces and interpolation operators}\label{sec:coarse-space}
We start by introducing the two coarse spaces (cf. \cite{Marcinkowski:2017:MCSAAS}) for the additive average Schwarz method, they correspond to $\scalebox{1.0}{TYPE}=\scalebox{1.0}{SUBD}$ and $\scalebox{1.0}{TYPE}=\scalebox{1.0}{LAYER}$. In the base, both have the same classical additive average Schwarz coarse space which is defined as the range of the average interpolation operator $I_0: V^h\rightarrow V^h$ (cf. \cite{Bjorstad:1997:ASM,Dryja:2010:AAS,Loneland:2016:AASM}):
\begin{equation}\label{eq:aver-interp-op}
 I_0 u(x)= \left\{ 
 \begin{array}{ll}
 u(x) &\qquad x\in \Gamma_h,\\
 \overline{u}_k &\qquad x\in \Omega_{k,h},
 \end{array}   
 \qquad k=1,\ldots,N,
 \right.
\end{equation}
where $\overline{u}_k=\frac{1}{n_k}\sum_{x\in \partial\Omega_{k,h}} u(x)$ with $n_k$ being the number of nodal points in $\partial\Omega_{k,h}$, in other words, the discrete average of $u$ over the boundary of the subdomain. This space is then enriched with functions that are adaptively selected eigenfunctions of a specially constructed generalized eigenvalue problem, cf. (\ref{eq:eigen-prl}), defined locally in each subdomain, and extended by zero to the rest of the domain. This local generalized eigenvalue problem is of either $\scalebox{1.0}{TYPE}=\scalebox{1.0}{SUBD}$ or $\scalebox{1.0}{TYPE}=\scalebox{1.0}{LAYER}$, differing in the bilinear form $b_k^{\scalebox{0.5}{TYPE}}(\cdot,\cdot)$ used in (\ref{eq:eigen-prl}). The resulting coarse space in either case is then the classical additive average Schwarz coarse space $I_0V^h$ enriched with the corresponding functions, cf. (\ref{eq:eigen-prl})--(\ref{eq:coarse_space}) below.

Here is how the generalized eigenvalue problem is defined locally in each subdomain $\Omega_k$. Find all eigen pairs:
$(\lambda_j^{k,\scalebox{0.5}{TYPE}},\psi_j^{k,\scalebox{0.5}{TYPE}})\in (\mathbb{R},V^h_0(\Omega_k))
$ such that 
\begin{equation} \label{eq:eigen-prl}
\begin{array}{lcl}
   a_k(\psi_j^{k,\scalebox{0.5}{TYPE}},v)=
   \lambda_j^{k,\scalebox{0.5}{TYPE}}
        b_k^{\scalebox{0.5}{TYPE}}(\psi_j^{k,\scalebox{0.5}{TYPE}},v)
        &\qquad &\forall v \in V^h_0(\Omega_k),\\
 b_k^{\scalebox{0.5}{TYPE}}(\psi_j^{k,\scalebox{0.5}{TYPE}},\psi_j^{k,\scalebox{0.5}{TYPE}})=1 &&
\end{array} 
\end{equation}
where $a_k(\cdot,\cdot)$ and $b_k^{\scalebox{0.5}{TYPE}}(\cdot,\cdot)$ for $\scalebox{1.0}{TYPE}\in\{\scalebox{1.0}{SUBD},\scalebox{1.0}{LAYER}\}$, are symmetric bilinear forms defined as follows,
\begin{eqnarray*}
  a_k(u,v)&:=&\int_{\Omega_k}\alpha\nabla u\nabla v\: d x
\end{eqnarray*}
and
\begin{eqnarray*}
   b_k^{\scalebox{0.5}{SUBD}}(u,v)
     &:=&\int_{\Omega_k}\underline{\alpha}_k\nabla u\nabla v\: d x,\\
  b_k^{\scalebox{0.5}{LAYER}}(u,v)
     &:=&\int_{\Omega_k^\delta}\underline{\alpha}_{k,\delta}\nabla u\nabla v\: d x     +
     \int_{\Omega_k\setminus \Omega_k^\delta}\alpha\nabla u\nabla v\: d x.
\end{eqnarray*}
Note that if two eigenvalues are different then their respective eigenspaces and eigenfunctions are both $a_k(\cdot,\cdot)$- and $ b_k^{\scalebox{0.5}{TYPE}}(\cdot,\cdot)$ orthogonal to each other. In case of an eigenvalue of multiplicity larger than one, we consider all its eigenfunctions as one.
We also order the eigenvalues in the decreasing order as
$ \lambda_1^{k,\scalebox{0.5}{TYPE}}\geq \lambda_2^{k,\scalebox{0.5}{TYPE}}\geq \ldots,
 \lambda_{N_k}^{k,\scalebox{0.5}{TYPE}}>0$ where $N_k$ is the dimension of $V^h_0(\Omega_k)$.
 
\begin{remark}
We see that  $1 \leq \lambda_j^{k,\scalebox{0.5}{SUBD}} \leq \frac{\overline{\alpha}_k}{\underline{\alpha}_k}$ and  $1 \leq \lambda_j^{k,\scalebox{0.5}{LAYER}} \leq \frac{\overline{\alpha}_{k,\delta}}{\underline{\alpha}_{k,\delta}}$. 
Thus when $\alpha$ is constant in $\Omega_k$ all eigenvalues of both eigenvalue problems are equal to one, and when $\alpha$ is constant in the layer $\Omega_k^\delta$ all eigenvalues
$\lambda_j^{k,\scalebox{0.5}{LAYER}}$ are equal to one.
\end{remark} 
  
For further use, we extend $\psi_j^{k,\scalebox{0.5}{TYPE}}$ by zero to the rest of the domain $\Omega$, denoting the extended function by the same symbol. Now let
\begin{equation} \label{eq:enrichment-space}
 W_k^{\scalebox{0.5}{TYPE}}:=\mathrm{Span}
 (\psi_j^{k,\scalebox{0.5}{TYPE}})_{k=1}^{M_k^{\scalebox{0.5}{TYPE}}},
 \qquad  \scalebox{0.8}{TYPE}\in\{\scalebox{0.8}{LAYER,SUBD}\},
 \end{equation}
 where $0 \leq M_k^{\scalebox{0.5}{TYPE}} < N_k$ is a number either preset by the user or chosen adaptively (it is the number of eigenvalues whose magnitudes are smaller than or equal to a given threshold). We assume that if an eigenvalue which has been selected to be included has multiplicity larger than one, then all its eigenfunctions will be included in the $W_k^{\scalebox{0.5}{TYPE}}$. Consequently, $\lambda_{M_k^{\scalebox{0.5}{TYPE}}+1}^{\scalebox{0.5}{TYPE}}<\lambda_{M_k^{\scalebox{0.5}{TYPE}}}^{\scalebox{0.5}{TYPE}}$.
 $M_k^{\scalebox{0.5}{TYPE}}=0$ means enrichment is not required in the subdomain $\Omega_k$.
 Our coarse spaces are then defined as follows,
\begin{equation} \label{eq:coarse_space}
 V_0^{\scalebox{0.5}{TYPE}}=I_0V^h + \sum_{k=1}^N W_k^{\scalebox{0.5}{TYPE}},
 \qquad  \scalebox{0.8}{TYPE}\in\{\scalebox{0.8}{LAYER,SUBD}\}.
\end{equation}

The two operators, which we need for the analysis, are defined here. The first one being a $b_k^{\scalebox{0.5}{TYPE}}(\cdot,\cdot)$ orthogonal projection operator 
$
 \Pi_k^{\scalebox{0.5}{TYPE}}:V^h_0(\Omega_k)\rightarrow
   V^h_0(\Omega_k),
$
defined as
\begin{equation}\label{eq:spectral-proj}
\Pi_k^{\scalebox{0.5}{TYPE}} v= \sum_{j=1}^{M_k^{\scalebox{0.5}{TYPE}}} b_k^{\scalebox{0.5}{TYPE}}
(v,\psi_j^{k,\scalebox{0.5}{TYPE}}) \psi_j^{k,\scalebox{0.5}{TYPE}},
\quad  \scalebox{0.8}{TYPE}\in\{\scalebox{0.8}{LAYER,SUBD}\},
\end{equation}
where $ (\psi_j^{k,\scalebox{0.5}{TYPE}})_j$ form the $b_k^{\scalebox{0.5}{TYPE}}(\cdot,\cdot)$-orthonormal eigen-basis of $V^h_0(\Omega_k)$, cf. (\ref{eq:eigen-prl}). The second operator is defined in the following paragraph.

We note that for any $u\in V^h$, the function $w=u-I_0u \in V^h$ equals zero on $\partial\Omega_k $, for each $k$, thus $\Pi_k^{\scalebox{0.5}{TYPE}} (u-I_0u)_{|\Omega_k}$, 
$\scalebox{0.8}{TYPE}\in\{\scalebox{0.8}{LAYER,SUBD}\}$ is properly defined, cf. (\ref{eq:spectral-proj}). It is then further extended by zero to the rest of the domain obtaining a function in  $W_k^{\scalebox{0.5}{TYPE}}$, cf. (\ref{eq:enrichment-space}). Further the extended  $\Pi_k^{\scalebox{0.5}{TYPE}} (u-I_0u)_{|\Omega_k}$ will be denoted by the same symbol.
Then, $I_0^{\scalebox{0.5}{TYPE}}:V^h\rightarrow V_0^{\scalebox{0.5}{TYPE}}$ is defined as follows,
\begin{equation}\label{eq:coarse-enr-oper}
 I_0^{\scalebox{0.5}{TYPE}}=I_0 u + \sum_{k=1}^N \Pi_k^{\scalebox{0.5}{TYPE}} (u-I_0u)
 \qquad   
  \scalebox{0.8}{TYPE}\in\{\scalebox{0.8}{LAYER,SUBD}\}.
\end{equation}

\section{Condition number bound}\label{sec:cond-bound}
We present our main theoretical result here, cf. Theorem~\ref{thm:cond-bound}, which gives an upperbound for the condition number of the preconditioned system (\ref{eq:ASMprob}).
\begin{theorem}\label{thm:cond-bound}
Let $ P^{\scalebox{0.5}{TYPE}}$ be the additive Schwarz operator, where $\scalebox{0.8}{TYPE}\in\{\scalebox{0.8}{LAYER,SUBD}\}$, as defined in (\ref{eq:ASM-oper}). Then it holds that
\begin{equation*}
 \left( \min_k \frac{1}{\lambda_{M_k^{\scalebox{0.5}{TYPE}}+1}^{\scalebox{0.5}{TYPE}}}\right) \frac{H}{h} a(u,u)\lesssim a(P^{\scalebox{0.5}{TYPE}}u,u) \lesssim  a(u,u)
\quad \forall u \in V^h, \; \scalebox{0.8}{TYPE}\in\{\scalebox{0.8}{LAYER,SUBD}\},
\end{equation*} 
where $H=\mathrm{diam}(\Omega_k)$ and $\lambda_{M_k^{\scalebox{0.5}{TYPE}}+1}^{\scalebox{0.5}{TYPE}}$ is the $M_k^{{\scalebox{0.5}{TYPE}}+1}$-th eigenvalue of (\ref{eq:eigen-prl}) (cf. also  (\ref{eq:enrichment-space})).
\end{theorem}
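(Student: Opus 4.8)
The plan is to invoke the abstract theory of additive Schwarz operators (cf. \cite{Toselli:2005:DDM,Smith:1996:DDP}): the two-sided spectral bound on $P^{\scalebox{0.5}{TYPE}}$ reduces to (i) an upper bound coming from a bound on the local solvers together with a strengthened Cauchy--Schwarz estimate, and (ii) a lower bound coming from a stable decomposition of an arbitrary $u\in V^h$. The upper inequality $a(P^{\scalebox{0.5}{TYPE}}u,u)\lesssim a(u,u)$ is the easy half: each $P_k$ and $P_0^{\scalebox{0.5}{TYPE}}$ is an $a(\cdot,\cdot)$-orthogonal projection, and the local subspaces $V_k$ are pairwise $a(\cdot,\cdot)$-orthogonal, since their functions are supported in the closed subdomains $\overline\Omega_k$ and vanish on the interface $\Gamma$; hence $\sum_k a(P_ku,u)\le a(u,u)$, and adding $a(P_0^{\scalebox{0.5}{TYPE}}u,u)\le a(u,u)$ gives the constant upper bound uniformly in $N,H,h$.

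For the lower bound I would use Lions' lemma: it suffices to produce, for each $u\in V^h$, an admissible splitting $u=u_0+\sum_{k=1}^N u_k$ with $u_0\in V_0^{\scalebox{0.5}{TYPE}}$, $u_k\in V_k$, and $a(u_0,u_0)+\sum_k a(u_k,u_k)\le C_0^2\,a(u,u)$, after which $C_0^{-2}a(u,u)\lesssim a(P^{\scalebox{0.5}{TYPE}}u,u)$. I take the splitting dictated by the operators of Section~\ref{sec:coarse-space}: put $w:=u-I_0u$, which vanishes at every node of $\partial\Omega_k$, set the coarse part $u_0:=I_0^{\scalebox{0.5}{TYPE}}u=I_0u+\sum_k\Pi_k^{\scalebox{0.5}{TYPE}}w$, and set the local parts $u_k:=\big((\mathrm{Id}-\Pi_k^{\scalebox{0.5}{TYPE}})w\big)\big|_{\Omega_k}$ extended by zero. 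Since $\Pi_k^{\scalebox{0.5}{TYPE}}w\in W_k^{\scalebox{0.5}{TYPE}}\subset V_0^{\scalebox{0.5}{TYPE}}$ and each $u_k\in V_k$, a node-by-node check on $\Omega_k$ shows that this is indeed a decomposition of $u$.

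The key to making the local terms cheap is that each $u_k$ is, by definition of the $b_k^{\scalebox{0.5}{TYPE}}$-orthogonal spectral projection $\Pi_k^{\scalebox{0.5}{TYPE}}$, the component of $w$ in the complementary eigenspace spanned by the eigenfunctions whose eigenvalue does not exceed $\lambda_{M_k+1}^{k,\scalebox{0.5}{TYPE}}$. Expanding $u_k$ in the $b_k^{\scalebox{0.5}{TYPE}}$-orthonormal eigenbasis and using the eigenrelation \eqref{eq:eigen-prl} gives the Rayleigh-quotient estimate $a_k(u_k,u_k)\le\lambda_{M_k+1}^{k,\scalebox{0.5}{TYPE}}\,b_k^{\scalebox{0.5}{TYPE}}(u_k,u_k)$, while $b_k^{\scalebox{0.5}{TYPE}}(u_k,u_k)\le b_k^{\scalebox{0.5}{TYPE}}(w,w)$ because $\Pi_k^{\scalebox{0.5}{TYPE}}$ is a $b_k^{\scalebox{0.5}{TYPE}}$-orthogonal projection. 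Summing over $k$ yields $\sum_k a(u_k,u_k)\le(\max_k\lambda_{M_k+1}^{k,\scalebox{0.5}{TYPE}})\sum_k b_k^{\scalebox{0.5}{TYPE}}(w,w)$, and the coarse term is controlled by $a(u_0,u_0)\lesssim a(u,u)+\sum_k a(u_k,u_k)$ via $u_0=u-\sum_k u_k$ and the pairwise-disjoint supports of the $u_k$.

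What remains, and what I expect to be the main obstacle, is the coefficient-robust average Schwarz estimate $\sum_k b_k^{\scalebox{0.5}{TYPE}}(u-I_0u,u-I_0u)\lesssim \tfrac{H}{h}\,a(u,u)$, the analogue for the weighted forms $b_k^{\scalebox{0.5}{TYPE}}$ of the classical bounds of \cite{Bjorstad:1997:ASM,Dryja:2010:AAS}. The factor $H/h$ is produced because $I_0u-\overline u_k$ is supported in the one-element-wide boundary layer and rises from $0$ to $u-\overline u_k$ across a single element, so that a discrete trace inequality followed by a Poincar\'e inequality on $\Omega_k$ (for SUBD) or on the layer $\Omega_k^\delta$ (for LAYER) turns the boundary variation of $u$ into interior energy at the cost of exactly one factor $H/h$. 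The delicate issue is robustness in the coefficient: one must check that weighting by $\underline\alpha_k$ in the SUBD form, and by $\underline\alpha_{k,\delta}$ on the layer together with the untouched $\alpha$ on $\Omega_k\setminus\Omega_k^\delta$ in the LAYER form, is precisely what lets each weighted contribution be absorbed into $a_k(u,u)=\int_{\Omega_k}\alpha|\nabla u|^2$ independently of the size and jumps of $\alpha$; for the LAYER form this forces the trace/Poincar\'e argument to be localized to $\Omega_k^\delta$. Combining this with the spectral estimate gives $C_0^2\lesssim \tfrac{H}{h}\max_k\lambda_{M_k+1}^{k,\scalebox{0.5}{TYPE}}$, and Lions' lemma then delivers the lower spectral bound of the theorem, completing the proof together with the upper bound of the first step.
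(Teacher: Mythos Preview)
Your proposal is correct and follows essentially the same route as the paper: the abstract Schwarz framework with exact local solvers and mutually $a$-orthogonal local subspaces for the upper bound, and for the lower bound the decomposition $u_0=I_0^{\scalebox{0.5}{TYPE}}u$, $u_k=(I-\Pi_k^{\scalebox{0.5}{TYPE}})(u-I_0u)|_{\Omega_k}$, controlled via the spectral estimate together with the key bound $b_k^{\scalebox{0.5}{TYPE}}(u-I_0u,u-I_0u)\lesssim (H/h)\,a_k(u,u)$ (the paper's Lemmas~1--4).

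One small correction on the LAYER case: the trace/Poincar\'e step is \emph{not} localized to the thin layer $\Omega_k^\delta$ (a Poincar\'e inequality there would carry the wrong constant); it is still taken over the full subdomain $\Omega_k$. The mechanism is rather that $I_0u$ is constant on $\Omega_k\setminus\Omega_k^\delta$, so $\nabla(u-I_0u)=\nabla u$ there and the full-$\alpha$ portion of $b_k^{\scalebox{0.5}{LAYER}}(u-I_0u,u-I_0u)$ equals $\int_{\Omega_k\setminus\Omega_k^\delta}\alpha|\nabla u|^2\le a_k(u,u)$ for free; only the remaining layer term, which carries the harmless weight $\underline\alpha_{k,\delta}$, requires the trace--Poincar\'e argument on $\Omega_k$ to be absorbed into $a_k(u,u)$.
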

\begin{proof}
The proof follows from the theory of abstract Schwarz framework which requires three key assumptions to be verified, cf. e.g.  \cite{Smith:1996:DDP,Toselli:2005:DDM}.
The first assumption is on the existence of a stable splitting (cf. e.g. Assumption~2.2 in \cite{Toselli:2005:DDM}) which will be proved in Lemma~\ref{prop:stable-decomp} below. It is straightforward to verify the other two assumptions. The local stability assumption is satisfied with the stability constant being equal to one since only the exact bilinear form is used for the local bilinear forms. While the assumption on the Cauchy-Schwarz relationship between the local subspaces is satisfied with the spectral radius of the matrix of constants of the strengthened Cauchy-Schwarz inequalities being equal to one, since the local subspaces are orthogonal to each other.   
\end{proof}



We prove the stability assumption needed in the proof of Theorem \ref{thm:cond-bound}, in Lemma \ref{prop:stable-decomp}. In order for that we need a few estimates which we state first.
\subsection*{A spectral estimate}
A well-known spectral estimate which is used in our analysis, is presented here.
Let $V$ be a finite dimensional space with a symmetric positive definite bilinear form $b(u,v)$, and a symmetric non-negative definite form $a(u,v)$. We say that 
$\lambda\in \mathbb{R}$ is an eigenvalue of the following eigenvalue problem, if there exists a non-zero eigenfunction $\phi_\lambda\in V$ such that
\begin{equation}\label{eq:abs-eigenpr}
 a(\phi_\lambda,v)=\lambda b(\phi_\lambda,v) \qquad \forall v \in V.
\end{equation}
Now let $V_\lambda$ be the eigen space associated with the eigenvalue $\lambda$, and let $\Pi_\mu$ for $\mu>0$ be the $b(\cdot,\cdot)$-orthogonal projection onto the space 
\begin{equation}
  W_\mu=\sum_{\lambda > \mu} V_\lambda.
\end{equation}
where the sum is taken over all eigenvalues greater than $\mu$ (a threshold).
Then, we have the following lemma.
\begin{lemma}
For $u\in V$, it holds that
\begin{equation}
 \|u - \Pi_\mu u\|_a^2\leq \mu \|u\|_b^2,
\end{equation}
where $\|u\|_b=b(u,u)$ and $\|u\|_a=a(u,u)$. 
\end{lemma}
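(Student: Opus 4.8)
The plan is to reduce the statement to a simultaneous diagonalization of the two forms. Since $b(\cdot,\cdot)$ is symmetric positive definite, it serves as a genuine inner product on the finite dimensional space $V$, and since $a(\cdot,\cdot)$ is symmetric, the operator associated with $a$ is self-adjoint with respect to $b$. The spectral theorem then yields a basis $\{\phi_j\}_{j=1}^{n}$ of $V$, solving the eigenvalue problem (\ref{eq:abs-eigenpr}), that is $b(\cdot,\cdot)$-orthonormal and $a(\cdot,\cdot)$-orthogonal, with eigenvalues $\lambda_j\geq 0$ (the non-negativity coming from the non-negative definiteness of $a$). Concretely, $b(\phi_i,\phi_j)=\delta_{ij}$ and $a(\phi_i,\phi_j)=\lambda_j\delta_{ij}$, so that the basis decouples both forms.

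Next I would expand an arbitrary $u\in V$ in this basis as $u=\sum_{j}c_j\phi_j$ with coefficients $c_j=b(u,\phi_j)$. Orthonormality in $b$ and orthogonality in $a$ then give the two Parseval-type identities $\|u\|_b^2=b(u,u)=\sum_j c_j^2$ and $\|u\|_a^2=a(u,u)=\sum_j \lambda_j c_j^2$. The key observation is that $\Pi_\mu$, being the $b(\cdot,\cdot)$-orthogonal projection onto $W_\mu=\mathrm{span}\{\phi_j:\lambda_j>\mu\}$, acts by simply truncating this expansion: $\Pi_\mu u=\sum_{\lambda_j>\mu}c_j\phi_j$, whence $u-\Pi_\mu u=\sum_{\lambda_j\leq\mu}c_j\phi_j$.

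The estimate then follows by a direct computation. Using the $a$-orthogonality of the basis, $\|u-\Pi_\mu u\|_a^2=\sum_{\lambda_j\leq\mu}\lambda_j c_j^2$. Bounding each retained eigenvalue by the threshold, $\lambda_j\leq\mu$, and then enlarging the index set to all $j$ gives $\|u-\Pi_\mu u\|_a^2\leq\mu\sum_{\lambda_j\leq\mu}c_j^2\leq\mu\sum_j c_j^2=\mu\|u\|_b^2$, which is exactly the claimed inequality.

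I do not anticipate a genuine obstacle here; the result is a classical spectral estimate and everything after the diagonalization is bookkeeping with the orthonormal expansion. The single point requiring care is the justification of the simultaneous diagonalization, which relies essentially on $b(\cdot,\cdot)$ being positive definite (so it is a valid inner product) and on $a(\cdot,\cdot)$ being symmetric (so its associated operator is $b$-self-adjoint and admits a real spectral decomposition). Once this structural fact is in place, the truncation identity for $\Pi_\mu$ and the term-by-term eigenvalue bound deliver the result immediately.
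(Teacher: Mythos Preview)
Your proof is correct and follows essentially the same route as the paper's: both expand $u$ in a $b(\cdot,\cdot)$-orthogonal eigenbasis (the paper groups terms by eigenspace, you use a $b$-orthonormal basis, which is the same thing), identify $u-\Pi_\mu u$ as the truncated expansion over $\lambda_j\le\mu$, and then bound $\lambda_j$ by $\mu$ term by term. The only difference is presentational, not mathematical.
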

\begin{proof}
The proof is quite standard, yet, for the completeness we give its proof here.
If $\lambda_k$ and $\lambda_l$ are two different eigenvalues then $V_{\lambda_k}$ are $V_{\lambda_l}$ are orthogonal to each other with respect to both $a(\cdot,\cdot)$ and $b(\cdot,\cdot)$.
Let $u\in V$, then since $V=\sum_{\lambda} V_\lambda$ and the eigen spaces are both $b(\cdot,\cdot)$- and $a(\cdot,\cdot)$ orthogonal, we can uniquely write $u=\sum_\lambda \psi_\lambda$ with $\psi_\lambda\in V_\lambda$.
Now using (\ref{eq:abs-eigenpr}) we have
\begin{equation*}
\|u\|_b^2=\sum_\lambda \|\psi_\lambda\|_b^2 \qquad \|u\|_a^2
=\sum_\lambda \|\psi_\lambda\|_a^2
=\sum_\lambda \lambda\|\psi_\lambda\|_b^2.
\end{equation*}
Using the above and noting that $u-\Pi_\mu u=u-\sum_{\lambda >\mu}  \psi_\lambda= \sum_{\lambda\leq\mu}  \psi_\lambda$,
we immediately get
\begin{equation*}
 \|u-\Pi_\mu u\|_a^2=\sum_{\lambda\leq \mu} \|\psi_\lambda\|_a^2
 =\sum_{\lambda\leq \mu} \lambda\|\psi_\lambda\|_b^2
 \leq \sum_{\lambda\leq \mu} \mu\|\psi_\lambda\|_b^2
 \leq \sum_{\lambda} \mu\|\psi_\lambda\|_b^2 = \mu \|u\|_b^2,
\end{equation*}
what ends the proof.
\end{proof}

Now, applying the above lemma to the operator $\Pi_k^{\scalebox{0.5}{TYPE}}$, for
 $\scalebox{0.8}{TYPE}\in\{\scalebox{0.8}{LAYER,SUBD}\}$, and taking 
 $\mu=\lambda_{M_k^{\scalebox{0.5}{TYPE}}+1}^{\scalebox{0.5}{TYPE}}$
 we get the following estimate for the operator,
\begin{equation}\label{eq:enrich-est}
 a_k(u-\Pi_k^{\scalebox{0.5}{TYPE}}u,u-\Pi_k^{\scalebox{0.5}{TYPE}}u)
 \leq  \lambda_{M_k^{\scalebox{0.5}{TYPE}}+1}^{\scalebox{0.5}{TYPE}}
  b_k^{\scalebox{0.5}{TYPE}}(u,u)
  \quad 
  \scalebox{0.8}{TYPE}\in\{\scalebox{0.8}{LAYER,SUBD}\}.
\end{equation} 

\subsection*{An estimate of the coarse interpolation operator}
What we need first is an estimate of the average interpolation operator $I_0$ in the norms induced by the two local bilinear forms $b_k^{\scalebox{0.5}{TYPE}}(\cdot,\cdot)$, for $\scalebox{0.8}{TYPE}\in\{\scalebox{0.8}{LAYER,SUBD}\}$.
\begin{lemma}\label{lem:umI0u}
Let $I_0$ be the average interpolation operator as defined in (\ref{eq:aver-interp-op}).
Then 
\begin{equation}
 b_k^{\scalebox{0.5}{TYPE}}(u-I_0u,u-I_0u)\lesssim 
  \frac{H}{h} a_k(u,u)\qquad \forall u \in V^h
  \quad 
  \scalebox{0.8}{TYPE}\in\{\scalebox{0.8}{LAYER,SUBD}\}.
\end{equation}
\end{lemma}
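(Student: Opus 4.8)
The plan is to reduce everything to a single coefficient-free estimate for the average operator and then reinsert the coefficients, treating the two types separately. I would write $w:=u-I_0u$ and $\tilde u:=u-\overline u_k$; since $I_0$ reproduces constants we have $w=\tilde u-I_0\tilde u$, and because $I_0\tilde u$ carries the nodal values $\tilde u$ on $\partial\Omega_{k,h}$ and $0$ at all interior nodes, $w$ agrees with $\tilde u$ at every interior node. Consequently $\nabla w=\nabla\tilde u=\nabla u$ on every element that does not touch $\partial\Omega_k$, i.e. on $\Omega_k\setminus\Omega_k^\delta$, while $\nabla w$ can differ from $\nabla u$ only on the boundary layer $\Omega_k^\delta$. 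This single observation already disposes of the interior contribution in the LAYER form, since $\int_{\Omega_k\setminus\Omega_k^\delta}\alpha|\nabla w|^2=\int_{\Omega_k\setminus\Omega_k^\delta}\alpha|\nabla u|^2\le a_k(u,u)$.

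The core coefficient-free estimate I would establish is
\[
\sum_{x\in\partial\Omega_{k,h}}(u(x)-\overline u_k)^2\lesssim\frac{H}{h}\,|u|_{H^1(\Omega_k)}^2 .
\]
First I would replace the boundary average $\overline u_k$ by the domain average $\bar u_{\Omega_k}=|\Omega_k|^{-1}\int_{\Omega_k}u$; this only decreases the left-hand side because $\overline u_k$ minimises $c\mapsto\sum_x(u(x)-c)^2$. Next I would compare the discrete boundary sum with a genuine boundary norm through the one-dimensional mass-matrix equivalence $h\sum_{x\in\partial\Omega_{k,h}}v(x)^2\lesssim\|v\|_{L^2(\partial\Omega_k)}^2$. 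Finally I would chain the scaled trace inequality $\|v\|_{L^2(\partial\Omega_k)}^2\lesssim H^{-1}\|v\|_{L^2(\Omega_k)}^2+H|v|_{H^1(\Omega_k)}^2$ with the Poincar\'e inequality $\|u-\bar u_{\Omega_k}\|_{L^2(\Omega_k)}^2\lesssim H^2|u|_{H^1(\Omega_k)}^2$ applied to $v=u-\bar u_{\Omega_k}$; the two powers of $H$ combine into a single factor $H$, and dividing by $h$ yields the claimed $H/h$. To pass to a seminorm bound I would use the elementwise inequality $|z|_{H^1(\tau)}^2\lesssim\sum_{i<j}(z(p_i)-z(p_j))^2$ for affine $z$ on a shape-regular $\tau$ with vertices $p_i$: applied to $z=I_0\tilde u$ on the layer triangles (where its only nonzero nodal values sit on $\partial\Omega_{k,h}$) it gives $|I_0\tilde u|_{H^1(\Omega_k^\delta)}^2\lesssim\sum_{x\in\partial\Omega_{k,h}}(u(x)-\overline u_k)^2$, so that with $|\tilde u|_{H^1}=|u|_{H^1}$ and the triangle inequality, $|w|_{H^1(\Omega_k)}^2\lesssim\frac{H}{h}|u|_{H^1(\Omega_k)}^2$.

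For the SUBD form the conclusion is then immediate: since $\underline\alpha_k\le\alpha$ pointwise in $\Omega_k$,
\[
b_k^{\scalebox{0.5}{SUBD}}(w,w)=\underline\alpha_k|w|_{H^1(\Omega_k)}^2\lesssim\frac{H}{h}\,\underline\alpha_k|u|_{H^1(\Omega_k)}^2\le\frac{H}{h}\int_{\Omega_k}\alpha|\nabla u|^2=\frac{H}{h}\,a_k(u,u).
\]
For the LAYER form the interior part is already controlled, and it remains to bound the layer part $\underline\alpha_{k,\delta}|w|_{H^1(\Omega_k^\delta)}^2$. Restricting the decomposition of $w$ to $\Omega_k^\delta$ gives $|w|_{H^1(\Omega_k^\delta)}^2\lesssim|u|_{H^1(\Omega_k^\delta)}^2+\sum_{x\in\partial\Omega_{k,h}}(u(x)-\overline u_k)^2$, whose first piece obeys $\underline\alpha_{k,\delta}|u|_{H^1(\Omega_k^\delta)}^2\le\int_{\Omega_k^\delta}\alpha|\nabla u|^2\le a_k(u,u)$.

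The step I expect to be the main obstacle is the remaining layer term $\underline\alpha_{k,\delta}\sum_{x\in\partial\Omega_{k,h}}(u(x)-\overline u_k)^2$. The coefficient-free bound above would control it only by $\underline\alpha_{k,\delta}\frac{H}{h}|u|_{H^1(\Omega_k)}^2$, and since $\underline\alpha_{k,\delta}$ is merely the minimum over the \emph{layer} one cannot simply absorb it into $\int_{\Omega_k}\alpha|\nabla u|^2$: the boundary fluctuations are driven by the behaviour of $u$ throughout $\Omega_k$, whereas $\underline\alpha_{k,\delta}$ charges them at the layer weight. To close the argument with a linear $H/h$ constant I would therefore avoid the global Poincar\'e route here and keep the energy local to the strip $\Omega_k^\delta$ of width $\sim h$, for instance through a trace inequality on that strip combined with a tangential discrete Poincar\'e inequality along $\partial\Omega_k$ that re-expresses the differences $(u(x)-u(y))^2$ of neighbouring boundary nodes through the in-layer gradient $\int_{\Omega_k^\delta}|\nabla u|^2$, which is compatible with the admissible weight $\underline\alpha_{k,\delta}\le\alpha$ on $\Omega_k^\delta$. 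Arranging this localisation so that it produces precisely the linear factor $H/h$, rather than a quadratic one, is the delicate point of the whole proof and is where I would concentrate the effort.
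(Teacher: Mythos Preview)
Your treatment of the \textsc{subd} case and the opening reduction for the \textsc{layer} case (that $\nabla(u-I_0u)=\nabla u$ on $\Omega_k\setminus\Omega_k^\delta$, so the interior term is already dominated by $a_k(u,u)$) are exactly what the paper does, and your core coefficient-free estimate $\sum_{x\in\partial\Omega_{k,h}}(u(x)-\overline u_k)^2\lesssim\frac{H}{h}|u|_{H^1(\Omega_k)}^2$ via trace/Poincar\'e/scaling is the same as the paper's.

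Where you diverge is precisely the step you flagged as the obstacle. The paper does \emph{not} avoid it: after reaching $\underline\alpha_{k,\delta}\|\nabla I_0u\|_{L^2(\Omega_k^\delta)}^2\lesssim\underline\alpha_{k,\delta}\frac{H}{h}\|\nabla u\|_{L^2(\Omega_k)}^2$ it simply writes $\underline\alpha_{k,\delta}\|\nabla u\|_{L^2(\Omega_k)}^2\le a_k(u,u)$. That is exactly the absorption you rejected, and your objection is correct --- it requires $\underline\alpha_{k,\delta}\le\alpha$ throughout $\Omega_k$, which fails whenever the layer minimum exceeds the interior minimum. So the paper's printed argument has the same gap you identified; it is not that you are missing a trick.

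Your proposed repair, localising via a tangential discrete Poincar\'e along $\partial\Omega_k$, does not recover the linear factor. The Poincar\'e--Wirtinger constant on a closed chain of $n_k\sim H/h$ nodes scales like $n_k^2$, so one only gets
\[
\sum_{x\in\partial\Omega_{k,h}}(u(x)-\overline u_k)^2\ \lesssim\ \Bigl(\tfrac{H}{h}\Bigr)^{2}\!\!\sum_{\text{adj.}}(u(x_i)-u(x_{i+1}))^2\ \lesssim\ \Bigl(\tfrac{H}{h}\Bigr)^{2}|u|_{H^1(\Omega_k^\delta)}^2,
\]
hence at best $b_k^{\scalebox{0.5}{LAYER}}(u-I_0u,u-I_0u)\lesssim(H/h)^2\,a_k(u,u)$. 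This is sharp: take $\Omega_k=(0,H)^2$, $u=x_1$, $\alpha=M$ on $\Omega_k^\delta$ and $\alpha=1$ inside; then $b_k^{\scalebox{0.5}{LAYER}}(u-I_0u,u-I_0u)\sim M/h$ while $a_k(u,u)\sim Mh+1$, so the ratio to $\frac{H}{h}a_k(u,u)$ behaves like $\min(M,H/h)$ and is unbounded in $M$. Your instinct that this is ``the delicate point of the whole proof'' is right --- neither your sketch nor the paper's own argument closes it with a constant independent of the coefficient contrast.
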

\begin{proof}
In case of $\scalebox{0.8}{TYPE=SUBD}$ the proof follows from \cite{Bjorstad:1997:ASM} (see also \cite{Loneland:2016:AASM}). Namely, we get
\begin{equation*}
 b_k^{\scalebox{0.5}{SUBD}}(u-I_0u,u-I_0u)=
  \underline{\alpha}_k\|\nabla (u -I_0 u)\|_{L^2(\Omega_k)}^2\lesssim \underline{\alpha}_k\frac{H}{h}\|\nabla u\|_{L^2(\Omega_k)}^2 \leq \frac{H}{h}a_k(u,u).
\end{equation*}
For the first inequality above we refer to \cite{Bjorstad:1997:ASM}, while the second inequality follows from the definition of $\underline{\alpha}_k^2$, cf. (\ref{eq:coeff-min-max}).  

In the case $\nabla (u-I_0u)=\nabla u $ on each fine triangle  $\tau \in T_h(\Omega_k)$ which is not inside the $\Omega_k^\delta$, since $I_0u$ is constant there, cf. (\ref{eq:aver-interp-op}). Hence 
\begin{eqnarray*}
 b_k^{\scalebox{0.5}{LAYER}}(u-I_0u,u-I_0u)&=&
  \|\alpha_k^{1/2}\nabla (u-I_0 u)\|_{L^2(\Omega_k\setminus \Omega_k^\delta)}^2+
   \underline{\alpha}_{k,\delta}\|\nabla (u-I_0 u)\|_{L^2(\Omega_k^\delta)}^2\\
  &=&
  \|\alpha_k^{1/2}\nabla u\|_{L^2(\Omega_k\setminus \Omega_k^\delta)}^2+  \underline{\alpha}_{k,\delta}\|\nabla u-I_0 u\|_{L^2(\Omega_k^\delta)}^2 \\
 & \leq&
   \|\alpha_k^{1/2}\nabla u\|_{L^2(\Omega_k)}^2+  \underline{\alpha}_{k,\delta}\|\nabla( u-I_0 u)\|_{L^2(\Omega_k^\delta)}^2 \\
 & =&a_k(u,u)+   \underline{\alpha}_{k,\delta}\|\nabla( u-I_0 u)\|_{L^2(\Omega_k^\delta)}^2.
\end{eqnarray*}
To estimate the second term above, we utilize a triangle inequality and (\ref{eq:coeff-min-max}), giving
\begin{eqnarray}\nonumber
   \underline{\alpha}_{k,\delta}\|\nabla( u-I_0 u)\|_{L^2(\Omega_k^\delta)}^2
  &\lesssim&  
    \underline{\alpha}_{k,\delta}\|\nabla u\|_{L^2(\Omega_k^\delta)}^2 
   +  \underline{\alpha}_{k,\delta}\|\nabla I_0 u\|_{L^2(\Omega_k^\delta)}^2\\
   &\leq&
    a_k(u,u)+  \underline{\alpha}_{k,\delta}\|\nabla I_0 u\|_{L^2(\Omega_k^\delta)}^2. 
    \label{eq:proof-avintop}
\end{eqnarray}
Again, to estimate the second term in (\ref{eq:proof-avintop}), the proof is analogous to the proof of Lemma~4.4 in \cite{Loneland:2016:AASM}, however, for the sake of completeness we provide a short proof here.

Note that for a triangle $\tau \in T_h$ we have the following equivalence,
\begin{equation*}
 |\nabla u|_{L^2(\tau)}^2\lesssim \sum_{i,j\in\{1,2,3\}}|u(x_i)-u(x_j)|^2\lesssim  |\nabla u|_{L^2(\tau)}^2,
\end{equation*}
where the sum is taken over all pairs of vertices of $\tau$.
Using this and the discrete equivalence of the $L^2$ norm over a 1D element, we get
\begin{eqnarray*}
 \|\nabla( I_0 u)\|_{L^2(\Omega_k^\delta)}^2
 &=&\sum_{\tau \subset \Omega_k^\delta} \|\nabla I_0 u\|_{L^2(\tau)}^2
\lesssim
 \sum_{x\in  \partial \Omega_{k,h}} (u(x)-\overline{u}_k)^2 \\
 &\lesssim&
h^{-1}\|u-\overline{u}_k\|_{L^2(\partial\Omega_k)}^2. 
\end{eqnarray*}
By a trace theorem, Poincar\'e inequality and a scaling argument, we get
\begin{equation*}
 h^{-1}\|u-\overline{u}_k\|_{L^2(\partial\Omega_k)}^2 \lesssim 
 \frac{H}{h}\|\nabla u\|_{L^2(\Omega_k)}^2.
\end{equation*} 
Now using the last two estimates and (\ref{eq:coeff-min-max}), the second term in the right hand side of 
(\ref{eq:proof-avintop}) can be bounded as
\begin{equation*}
  \underline{\alpha}_{k,\delta}\|\nabla( I_0 u)\|_{L^2(\Omega_k^\delta)}^2\lesssim
  \underline{\alpha}_{k,\delta} \frac{H}{h}\|\nabla u\|_{L^2(\Omega_k)}^2
 \leq  \frac{H}{h}a_k(u,u),
\end{equation*} 
what ends the proof.
\end{proof}

The following lemma gives a stability estimate for the coarse operator $I_0^{\scalebox{0.5}{TYPE}}$ (cf. (\ref{eq:coarse-enr-oper})), where $\scalebox{0.8}{TYPE}\in \{\scalebox{0.8}{SUBD},\scalebox{0.8}{LAYER}\}$.

 
\begin{lemma}\label{lemma:coarse-interp-est}
Let $I_0^{\scalebox{0.5}{TYPE}}$ be a coarse operator defined in (\ref{eq:coarse-enr-oper}). Then we have
\begin{equation}
 a(u-I_0^{\scalebox{0.5}{TYPE}}u,u - I_0^{\scalebox{0.5}{TYPE}}u)\lesssim
 \max_k  \lambda_{M_k^{\scalebox{0.5}{TYPE}}+1}^{\scalebox{0.5}{TYPE}}
 \frac{H}{h}a(u,u) \qquad   
  \scalebox{0.8}{TYPE}\in\{\scalebox{0.8}{LAYER,SUBD}\}
\end{equation} 
for any $u\in V^h$. Here $\lambda_{M_k^{\scalebox{0.5}{TYPE}}+1}^{\scalebox{0.5}{TYPE}}$
is as in Theorem \ref{thm:cond-bound}.
\end{lemma}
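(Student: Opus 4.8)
The plan is to localize the error $u - I_0^{\scalebox{0.5}{TYPE}}u$ to the individual subdomains and then chain the two estimates already established, namely the spectral estimate (\ref{eq:enrich-est}) and Lemma~\ref{lem:umI0u}, applied on each subdomain separately. To this end I would first set $w = u - I_0u$ and recall, as observed just before (\ref{eq:coarse-enr-oper}), that $w$ vanishes on every $\partial\Omega_k$, so that $w_{|\Omega_k}\in V^h_0(\Omega_k)$ and each $\Pi_k^{\scalebox{0.5}{TYPE}}w$ is well defined and supported in $\overline{\Omega}_k$.

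The key step is a localization identity. Since each extended function $\Pi_j^{\scalebox{0.5}{TYPE}}w$ is supported in $\overline{\Omega}_j$ and vanishes in the interior of every other subdomain, on each open subdomain $\Omega_k$ the sum $\sum_{j=1}^N \Pi_j^{\scalebox{0.5}{TYPE}}w$ collapses to the single term $\Pi_k^{\scalebox{0.5}{TYPE}}w$. Hence, recalling (\ref{eq:coarse-enr-oper}),
\[
 (u - I_0^{\scalebox{0.5}{TYPE}}u)_{|\Omega_k} = (w - \Pi_k^{\scalebox{0.5}{TYPE}}w)_{|\Omega_k}.
\]
Because the triangulation respects the subdomain partition, the global energy splits additively as $a(v,v)=\sum_{k=1}^N a_k(v_{|\Omega_k},v_{|\Omega_k})$, so that
\[
 a(u - I_0^{\scalebox{0.5}{TYPE}}u,u - I_0^{\scalebox{0.5}{TYPE}}u)
 = \sum_{k=1}^N a_k(w - \Pi_k^{\scalebox{0.5}{TYPE}}w,\, w - \Pi_k^{\scalebox{0.5}{TYPE}}w).
\]

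Next I would bound each summand. Applying the spectral estimate (\ref{eq:enrich-est}) with $u$ replaced by $w_{|\Omega_k}$ gives
\[
 a_k(w - \Pi_k^{\scalebox{0.5}{TYPE}}w,\, w - \Pi_k^{\scalebox{0.5}{TYPE}}w)
 \leq \lambda_{M_k^{\scalebox{0.5}{TYPE}}+1}^{\scalebox{0.5}{TYPE}}\, b_k^{\scalebox{0.5}{TYPE}}(w,w),
\]
and then Lemma~\ref{lem:umI0u}, applied to $w = u - I_0u$, yields $b_k^{\scalebox{0.5}{TYPE}}(w,w)\lesssim \frac{H}{h}\,a_k(u,u)$. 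Combining the two, summing over $k$, and extracting the maximal eigenvalue gives
\[
 a(u - I_0^{\scalebox{0.5}{TYPE}}u,u - I_0^{\scalebox{0.5}{TYPE}}u)
 \lesssim \Big(\max_k \lambda_{M_k^{\scalebox{0.5}{TYPE}}+1}^{\scalebox{0.5}{TYPE}}\Big)\frac{H}{h}\sum_{k=1}^N a_k(u,u)
 = \Big(\max_k \lambda_{M_k^{\scalebox{0.5}{TYPE}}+1}^{\scalebox{0.5}{TYPE}}\Big)\frac{H}{h}\,a(u,u),
\]
which is the asserted bound.

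The main obstacle I anticipate is making the localization identity fully rigorous at the interfaces: one must check that restricting the globally-assembled function to the open subdomain $\Omega_k$ indeed annihilates all the cross terms $\Pi_j^{\scalebox{0.5}{TYPE}}w$ with $j\neq k$, and that the additive splitting $a(v,v)=\sum_k a_k(\cdot,\cdot)$ holds without leftover interface contributions (this is where the fact that every triangle of $T_h$ lies in exactly one $\Omega_k$ is essential). Once this support-and-additivity bookkeeping is settled, the rest is a routine chaining of the two previously established estimates.
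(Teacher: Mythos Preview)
Your proposal is correct and follows essentially the same route as the paper's own proof: set $w=u-I_0u$, use the localization $(u-I_0^{\scalebox{0.5}{TYPE}}u)_{|\Omega_k}=(w-\Pi_k^{\scalebox{0.5}{TYPE}}w)_{|\Omega_k}$ to split the global energy into the subdomain sums, and then chain the spectral estimate~(\ref{eq:enrich-est}) with Lemma~\ref{lem:umI0u} before summing over $k$. The interface bookkeeping you flag as a potential obstacle is indeed routine here, since each $\Pi_j^{\scalebox{0.5}{TYPE}}w\in V_0^h(\Omega_j)$ (extended by zero) and every fine triangle lies in exactly one subdomain.
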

\begin{proof}
Define $w=u -I_0u$. Clearly, $w$ is equal to zero on the interface $\Gamma$.
Note that
\begin{equation}
u-I_0^{\scalebox{0.5}{TYPE}}u=\sum_k(I-\Pi_k^{\scalebox{0.5}{TYPE}}) w,
\end{equation}
which is also equal to zero on the interface $\Gamma$.
Then
\begin{equation}
a(u-I_0^{\scalebox{0.5}{TYPE}}u,u - I_0^{\scalebox{0.5}{TYPE}}u)=
\sum_k a_k((I-\Pi_k^{\scalebox{0.5}{TYPE}}) w,(I-\Pi_k^{\scalebox{0.5}{TYPE}}) w)
\end{equation}
Using (\ref{eq:enrich-est}) and Lemma~\ref{lem:umI0u}, we get
\begin{eqnarray*}
a_k((I-\Pi_k^{\scalebox{0.5}{TYPE}}) w,(I-\Pi_k^{\scalebox{0.5}{TYPE}}) w)
&\lesssim&  \lambda_{M_k^{\scalebox{0.5}{TYPE}}+1}^{\scalebox{0.5}{TYPE}}
b_k^{\scalebox{0.5}{TYPE}}(w,w) \\
&\lesssim &
\lambda_{M_k^{\scalebox{0.5}{TYPE}}+1}^{\scalebox{0.5}{TYPE}}\frac{H}{h}
a_k(u,u).
\end{eqnarray*}
Summing over all subdomains ends the proof.
\end{proof}

We are now ready to give a proof of the stability assumption of the splitting needed for the proof in Theorem \ref{thm:cond-bound}. 
\begin{lemma}[Stable Splitting] \label{prop:stable-decomp}
For $u\in V^h$, let $u_0=I_0^{\scalebox{0.5}{TYPE}}u\in V_0^{\scalebox{0.5}{TYPE}}$, for $\scalebox{0.8}{TYPE}\in\{\scalebox{0.8}{LAYER,SUBD}\}$ and
$u_k\in V_k$, for $k =1,\ldots,N$, equals to $(u-u_0)_{|\Omega_k}$ extended by zero to the rest of the domain.
Then  we have 
$ u=u_0+\sum_{k=1}^N u_i $,
and 
\begin{equation*}
 a(u_0,u_0)+\sum_{k=1}^Na(u_k,u_k)\lesssim 
\max_k \lambda_{M_k^{\scalebox{0.5}{TYPE}}+1}^{\scalebox{0.5}{TYPE}}\frac{H}{h}
a(u,u), \qquad   
  \scalebox{0.8}{TYPE}\in\{\scalebox{0.8}{LAYER,SUBD}\}.
\end{equation*}
\end{lemma}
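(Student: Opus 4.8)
The plan is to verify the two assertions of the lemma separately, letting Lemma~\ref{lemma:coarse-interp-est} carry the analytic weight.

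First I would establish the decomposition identity $u = u_0 + \sum_{k=1}^N u_k$. The key observation is that $w := u - I_0 u$ vanishes at every node of $\Gamma_h$, because $I_0 u(x) = u(x)$ for $x \in \Gamma_h$ by the definition (\ref{eq:aver-interp-op}) of the average operator. Each enrichment term $\Pi_k^{\scalebox{0.5}{TYPE}}(u - I_0 u)$ lies in $V_0^h(\Omega_k)$ extended by zero, hence vanishes on $\partial\Omega_k$ and in particular on $\Gamma$; therefore the coarse component $u_0 = I_0^{\scalebox{0.5}{TYPE}} u = I_0 u + \sum_k \Pi_k^{\scalebox{0.5}{TYPE}}(u - I_0 u)$ agrees with $u$ on $\Gamma_h$. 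Consequently $u - u_0$ vanishes on all subdomain boundaries (on $\partial\Omega$ since $V^h \subset H^1_0(\Omega)$, and on the interior interface $\Gamma$ by the above), so its restriction to each $\Omega_k$ is an admissible member of $V_0^h(\Omega_k)$. Summing these zero-extended restrictions reproduces $u - u_0$ exactly, since at an interior node of a subdomain only one term contributes while at an interface node every term is zero. This yields the splitting.

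Next I would identify the local pieces with the quantity already estimated in Lemma~\ref{lemma:coarse-interp-est}. Restricting $u - u_0 = w - \sum_j \Pi_j^{\scalebox{0.5}{TYPE}} w$ to $\Omega_k$, all terms with $j \neq k$ drop out (being supported in $\Omega_j$), leaving $u_k = (I - \Pi_k^{\scalebox{0.5}{TYPE}}) w$ on $\Omega_k$, extended by zero. Hence $\sum_k a(u_k,u_k) = \sum_k a_k\big((I - \Pi_k^{\scalebox{0.5}{TYPE}}) w,\,(I - \Pi_k^{\scalebox{0.5}{TYPE}}) w\big) = a(u - I_0^{\scalebox{0.5}{TYPE}}u,\, u - I_0^{\scalebox{0.5}{TYPE}}u)$, which is bounded by $\max_k \lambda_{M_k^{\scalebox{0.5}{TYPE}}+1}^{\scalebox{0.5}{TYPE}}\,\frac{H}{h}\,a(u,u)$ immediately by Lemma~\ref{lemma:coarse-interp-est}.

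It remains to control the coarse energy $a(u_0,u_0)$. For this I would apply the triangle inequality in the energy norm, $a(u_0,u_0)^{1/2} \leq a(u,u)^{1/2} + a(u-u_0,u-u_0)^{1/2}$, square it, and invoke Lemma~\ref{lemma:coarse-interp-est} once more for the second summand. Since the Remark following (\ref{eq:eigen-prl}) guarantees $\lambda_{M_k^{\scalebox{0.5}{TYPE}}+1}^{\scalebox{0.5}{TYPE}} \geq 1$ and $H/h \geq 1$, the plain term $a(u,u)$ is itself dominated by $\max_k \lambda_{M_k^{\scalebox{0.5}{TYPE}}+1}^{\scalebox{0.5}{TYPE}}\,\frac{H}{h}\,a(u,u)$, so both contributions collapse into the asserted bound; adding the two estimates finishes the proof. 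I do not expect a serious obstacle here, as Lemma~\ref{lemma:coarse-interp-est} already encodes the analytic heart of the argument. The only point demanding genuine care is the bookkeeping in the decomposition step: one must verify that $u-u_0$ vanishes on the entire interface, so that the zero-extended local restrictions are legitimate elements of the $V_k$ and reassemble $u-u_0$ without overlap error. Everything beyond that is a triangle inequality combined with the elementary facts $\lambda \geq 1$ and $H/h \geq 1$.
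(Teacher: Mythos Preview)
Your proposal is correct and follows essentially the same route as the paper: verify the splitting, identify $\sum_k a(u_k,u_k)$ with $a(u-I_0^{\scalebox{0.5}{TYPE}}u,\,u-I_0^{\scalebox{0.5}{TYPE}}u)$, bound $a(u_0,u_0)$ by a triangle inequality, and invoke Lemma~\ref{lemma:coarse-interp-est} for both pieces. Your write-up is in fact more careful than the paper's on two points---the interface-vanishing argument for $u-u_0$ and the explicit use of $\lambda_{M_k^{\scalebox{0.5}{TYPE}}+1}^{\scalebox{0.5}{TYPE}}\ge 1$, $H/h\ge 1$ to absorb the bare $a(u,u)$ term---both of which the paper leaves implicit.
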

\begin{proof}
It is not difficult to see that the splitting $u=u_0+\sum_{k=1}^N u_i$ is valid following the definition of $u_k$. We prove the stability of this splitting as follows. Using a triangle inequality we get 
\begin{equation}\label{eq:proof-stable-dec}
 a(u_0,u_0)\lesssim a(u,u)+a(u - I_0^{\scalebox{0.5}{TYPE}}u,u-I_0^{\scalebox{0.5}{TYPE}}u).
\end{equation}
Noting that 
\begin{equation*}
    a(u_k,u_k)=a_k(u_k,u_k)=a_k(u-I_0^{\scalebox{0.5}{TYPE}}u,u-I_0^{\scalebox{0.5}{TYPE}}u),
    \qquad k=1,\ldots,N,
\end{equation*}
we have
\begin{equation}\label{eq:proof-stable-dec:1}
\sum_{k=1}^Na(u_k,u_k)= \sum_{k=1}^Na_k(u-I_0^{\scalebox{0.5}{TYPE}}u,u-I_0^{\scalebox{0.5}{TYPE}}u) = a(u - I_0^{\scalebox{0.5}{TYPE}}u,u-I_0^{\scalebox{0.5}{TYPE}}u).
\end{equation}
Now, combining (\ref{eq:proof-stable-dec}) and (\ref{eq:proof-stable-dec:1}), and then applying Lemma~\ref{lemma:coarse-interp-est}, we get the proof.
\end{proof}

\section{Numerical experiments} \label{sec:num_exp}
We present some simple numerical experiments to validate our theory. The problem is considered on a unit square with homogeneous boundary conditions and the right-hand side function $f(x)=2\pi^2 \sin(\pi x) \sin(\pi y)$.
We chose the following distribution of the coefficient $\alpha$ for the experiment: it consists of a background, channels crossing inside subdomains and stretching out of subdomains (continuous across) or stopping at subdoman boundaries (discontinuous across), and inclusions along subdomain boundaries (placed at the corners), see Fig. \ref{fig:channelinclusion} for an illustration with $3\times 3$ subdomains. $\alpha$ is equal to $\alpha_b$ in the background, $\alpha_c$ in the crossing channels, and $\alpha_i$ in the inclusions placed at the corners. Experiments have been performed using the proposed methods as preconditioners with the conjugate gradients iteration, and stopping the iterations when the residual norm in each case is reduced by the factor 5e{-6}. The multiplicative version (cf. \cite{Rahman:2011:SPCG}) of the preconditioner has also been used, the results of which are reported for comparison. As expected, cf. e.g. \cite{Rahman:2011:SPCG}, the multiplicative version converges twice as fast as the additive version in terms of the number of iterations, its condition number being one fourth of that of the additive version.   

\begin{figure}[htb]
\centerline{
\hspace{-.8cm}\includegraphics[width=0.5\textwidth]{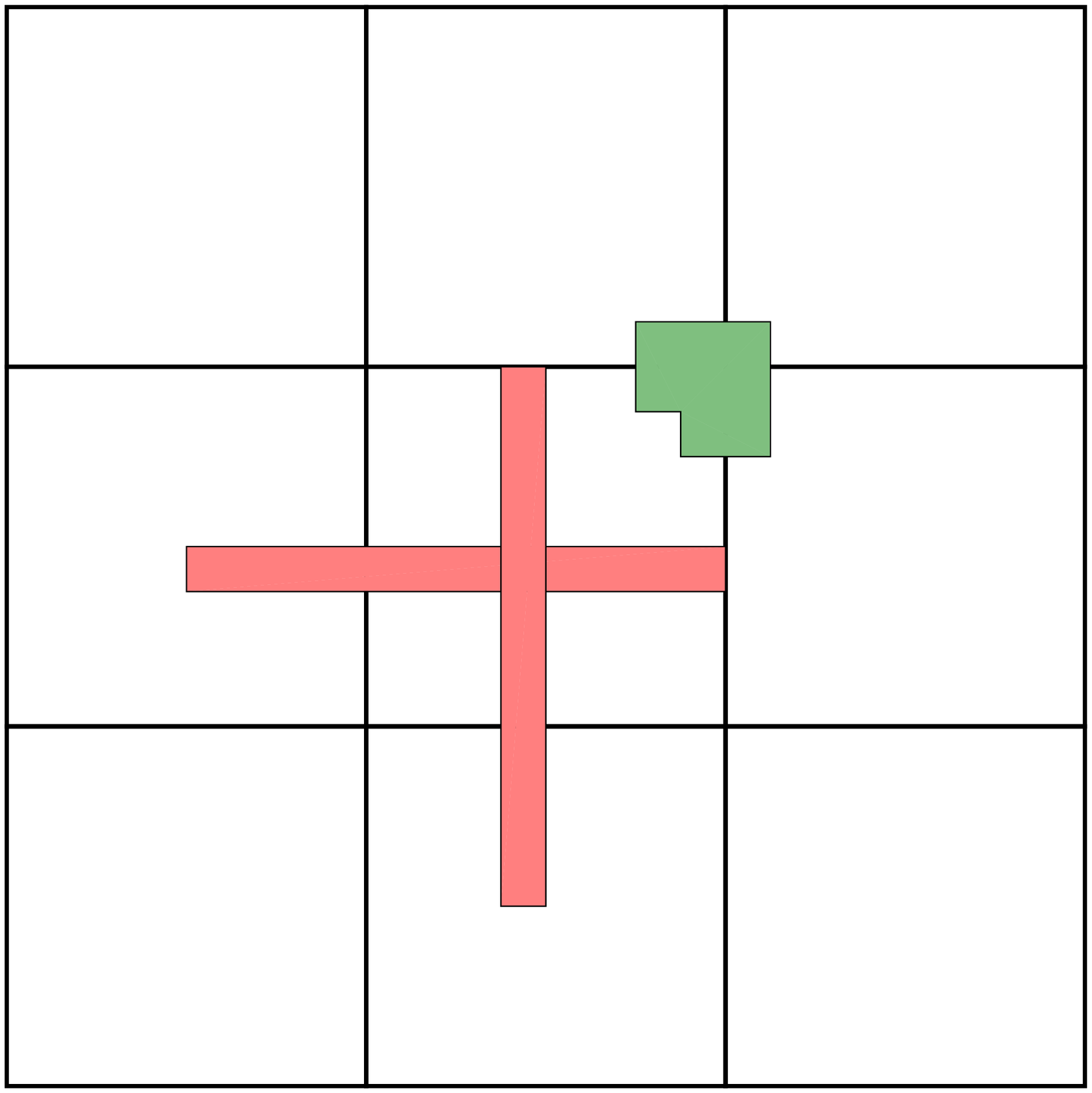}\hspace{-.3cm}
\includegraphics[width=0.5\textwidth]{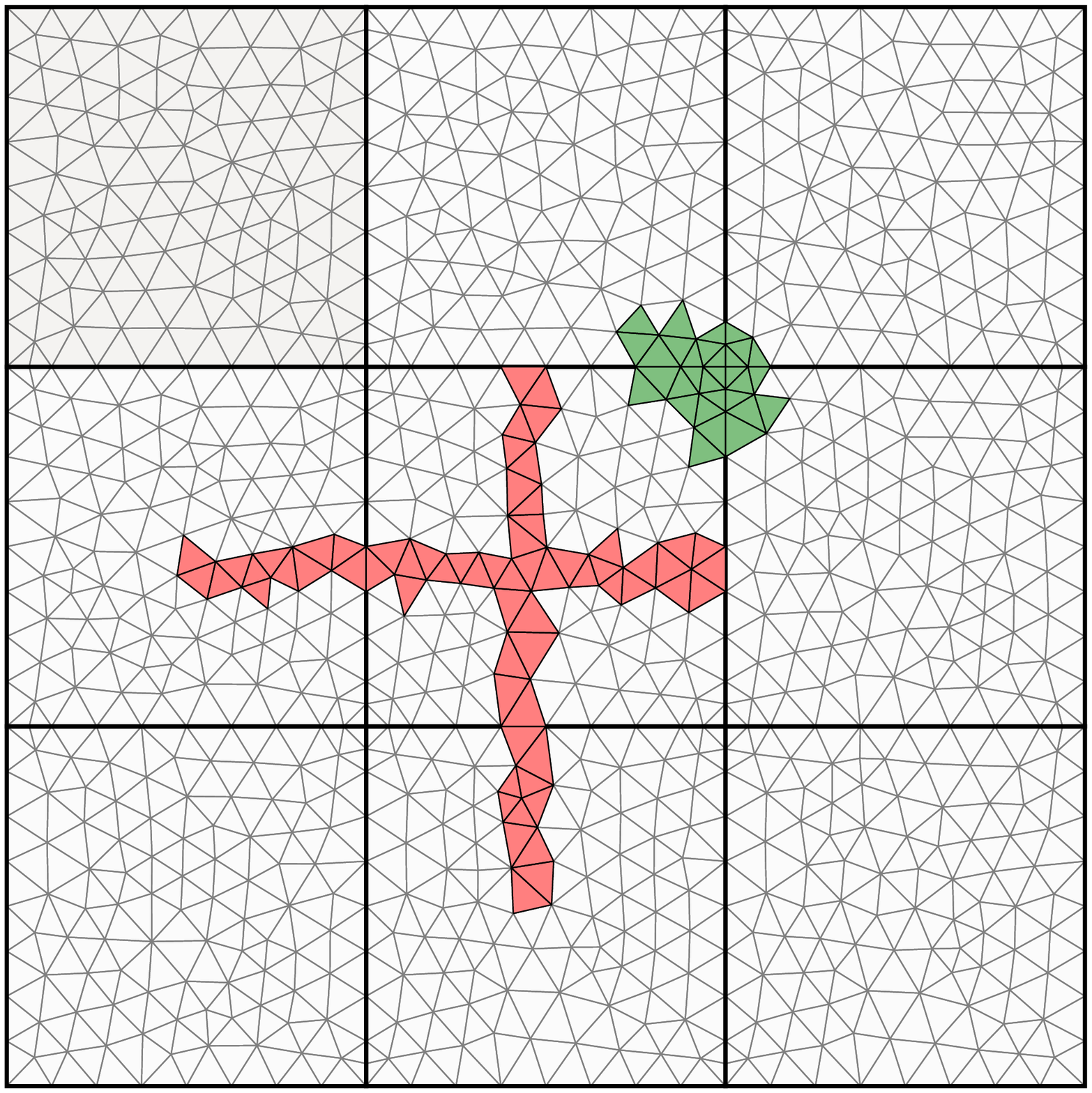}
}
\caption{On the left, a domain comprising of $3\times 3$ subdomains, with one inclusion (shaded green) placed at a subdomain corner, and a pair of channels (shaded red) crossing each other inside a subdomain, and they are both either continuous or discontinuous accross the subdomain's boundary. On the right, a finite element triangulation of the domain, showing the corresponding channels and the inclusion in the triangulation. $\alpha$ is equal to $\alpha_b$ in the background, $\alpha_c$ in the crossing channels, and $\alpha_i$ in the inclusions.}
\label{fig:channelinclusion}
\end{figure}

In our first experiment, we study the convergence behavior of the proposed method as we vary the mesh size parameters $H$ and $h$, and the jump in the coefficient $\alpha$, while for the enrichment including only those eigenfunctions for the enrichment, whose eigenvalues are greater than a given threshold (adaptive enrichment). We refer to Table \ref{tbl:1}, where the number of iterations required to converge and a condition number estimate (in parentheses) for each experiment are given for the $\scalebox{0.8}{TYPE}=\scalebox{0.8}{LAYER}$ algorithm. 

The results of $\scalebox{0.8}{TYPE}=\scalebox{0.8}{SUBD}$ algorithm have been very similar to those of $\scalebox{0.8}{TYPE}=\scalebox{0.8}{LAYER}$, however, there has been a significant difference in the number of eigenfunctions included, between the two, which we will discuss later in this section, cf. Fig. \ref{fig:bargraph}. The threshold for the eigenvalues has been set to 100.

\begin{table}[h]
\centering
\small
\begin{tabular}{|c|ccc|ccc|}
\hline 
& \multicolumn{3}{|c|}{Additive (ADD)} & \multicolumn{3}{|c|}{Multiplicative (MLT)} \\
\hline
\backslashbox{$h$}{$H$} & 1/3 & 1/6 & 1/9 & 1/3 & 1/6 & 1/9\\
\hline
  1/18 & 34 (5.84e{1}) &&                                  & 17 (1.49e1) &&\\
  1/36 & 56 (1.35e{2}) & 52 (5.71e{1}) &                 & 28 (3.40e1) & 26 (1.45e1) &\\
  1/54 & 70 (2.13e{2}) & 67 (9.32e{1}) & 58 (6.03e{1}) & 35 (5.34e1) & 34 (2.36e1) & 29 (1.53e1)\\
\hline
  1/18 & 37 (5.80e{1}) &&                                  & 19 (1.48e1) &&\\
  1/36 & 53 (1.34e{2}) & 53 (5.60e{1}) &                 & 27 (3.36e1) & 26 (1.43e1) &\\
  1/54 & 67 (2.12e{2}) & 68 (9.19e{1}) & 59 (5.94e{1}) & 33 (5.33e1) & 34 (2.32e1) & 29 (1.51e1)\\
\hline
\end{tabular}
\vspace{5mm}
\caption{
Showing number of iterations and a condition number estimates (in parentheses) for varying $H$ and $h$. The left block of results correspond to the additive version (ADD), while the right block corresponds to the multiplicative version (MLT) of the average Schwarz method. The first three rows correspond to the coefficient distribution $\alpha_b=1$, $\alpha_c=1e2$, and $\alpha_i=1e4$, and the last three rows correspond to the coefficient distribution $\alpha_b=1$, $\alpha_c=1e4$, and $\alpha_i=1e6$.}
\label{tbl:1}
\end{table}

\begin{table}[h]
\centering
\small
\begin{tabular}{|c| c c c c c c |}
\hline
 & 0 & 2 & 4 & 5 & 6 & 7 \\
\hline
 ADD & 508 (3.09e{6}) & 431 (1.08e{6}) & 186 (1.98e{4}) & 61 (4.62e2) & 49 (4.78e1) & 48 (4.71e1)\\
 MLT & 259 (7.73e{5}) & 218 (2.71e{5}) & 94 (4.96e{3}) & 30 (1.15e3) & 24 (1.22e1) & 24 (1.20e1)\\
\hline
\end{tabular}
\vspace{5mm}
\caption{
Showing number of iterations and a condition number estimates (in parentheses) for fixed number of eigenfunctions for enrichment (respectively 0, 2, 4, 5, 6, 7 of eigenfunctions in each subdomain). The first line (ADD) of results correspond to the additive version, while the second line (MLT) corresponds to the multiplicative version of the method. Here $H=1/6$, $h=1/36$, $\alpha_b=1$, $\alpha_c=1e4$, and $\alpha_i=1e6$.}
\label{tbl:2}
\end{table}

The columns of Table \ref{tbl:1} correspond to the subdomain size $H$ and the rows correspond to the mesh size $h$. In order to have the same pattern in the distribution of $\alpha$ even when we vary the size of the subdomains, that is to have crossing channels inside subdomains and inclusions at subdomain corners, as illustrated in Fig. \ref{fig:channelinclusion}. So we let the size of the channels and the inclusions to vary proportionally with $H$, which is somewhat artificial but inevitable for the purpose of this experiment. We should however mention that they do not vary with the mesh size $h$, so that each column of the table corresponds to the same set of channels and inclusions. We note that the diagonal entries of the table correspond to the same mesh size ratio $\frac{H}{h}$. The corresponding condition number estimates as seen from the table are very close to each other, suggesting that the proposed preconditioners are scalable in the sense that the condition number varies proportionately with the size of the subproblem, that is the ratio $\frac{H}{h}$. The fact that the condition number is independent of the jumps, is also evident from the table. 

Following our analysis, there is a minimum number of eigenfunctions (corresponding to the bad eigenvalues) which should be added in the enrichment for the method to be robust with respect to the contrast. In order to see that, in our next experiment, we choose one particular discretization ($H$, $h$) and distribution of $\alpha$, and run our algorithm each time with a fixed number of eigenfunctions for all subdomains. For the experiment we have chosen $H=1/6$ (i.e. $6\times 6$ subdomains) and $h=1/36$, and the results are presented in Table \ref{tbl:2}. As we can see from the table, the condition number improves as more and more eigenfunctions are included in the enrichment, but stops (or improves very slowly) once the sixth eigenfunction has been included. So the minimum number of eigenfunctions in this case is six. This also agrees with the adaptive version, cf. Table \ref{tbl:1}, where the same test case needed six eigenfunctions.      

In our final experiment, we compare the two algorithms, corresponding to $\scalebox{0.8}{TYPE}=\scalebox{0.8}{LAYER}$ and $\scalebox{0.8}{TYPE}=\scalebox{0.8}{SUBD}$. Although they are very similar in their convergence behavior (their condition number estimates are very close to each other), the former requires far less number of eigenvalues in order to achieve the same level of convergence whenever there are inclusions or channels inside a subdomain, cf. Fig. \ref{fig:bargraph}, suggesting that the $\scalebox{0.8}{TYPE}=\scalebox{0.8}{LAYER}$ algorithm has a clear advantage over the other in such cases.  

\begin{figure}[htb]
\centerline{
\hspace{-.8cm}\includegraphics[width=1\textwidth,height=.5\textwidth]{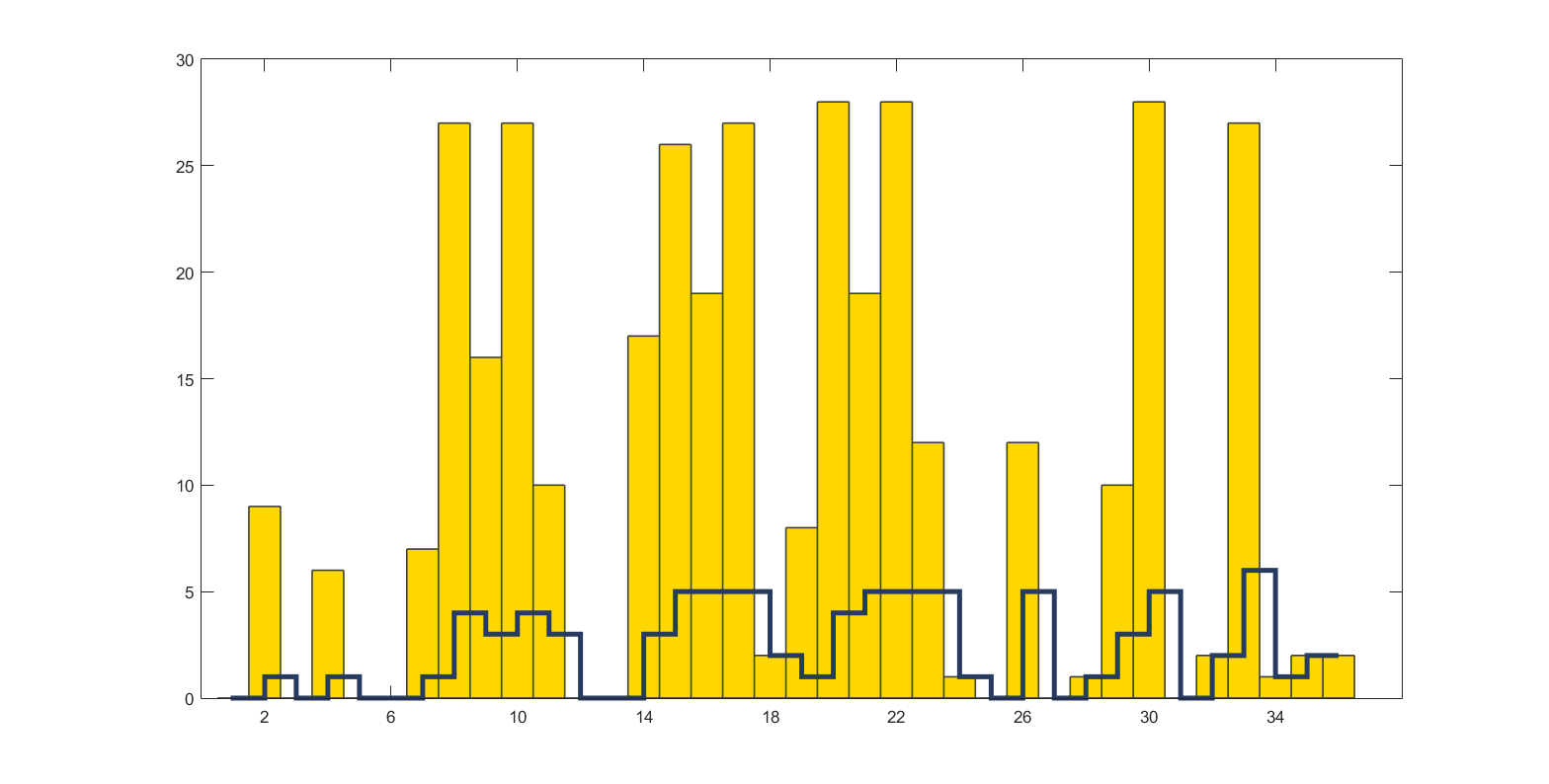}}
\caption{Showing the number of eigenvalues in each subdomain ($36$ subdomains), with values larger than the 100, selected for the enrichment. Here $H=1/6$ (i.e. $6\times 6$ subdomains) and $h=1/36$, $\alpha_b=1$, $\alpha_c=1e4$, and $\alpha_i=1e6$. The bar graph corresponds to {\footnotesize $TYPE = SUBD$} algorithm, 
and the stair graph corresponds to {\footnotesize $TYPE = LAYER$ algorithm}.
}
\label{fig:bargraph}
\end{figure}

\section*{Acknowledgments}
Leszek Marcinkowski was partially
supported by Polish Scientific Grant 2016/21/B/ST1/00350.

\bibliographystyle{siam}
\bibliography{aschwarz} 

\begin{thebibliography}{10}

\bibitem{Bjorstad:2003:ASM}
{\sc P.~E. Bj{\o}rstad, M.~Dryja, and T.~Rahman}, {\em Additive {S}chwarz
  methods for elliptic mortar finite element problems}, Numer. Math., 95
  (2003), pp.~427--457.

\bibitem{Bjorstad:1997:ASM}
{\sc P.~E. Bj{\o}rstad, M.~Dryja, and E.~Vainikko}, {\em Additive {S}chwarz
  methods without subdomain overlap and with new coarse spaces}, in Domain
  decomposition methods in sciences and engineering ({B}eijing, 1995), Wiley,
  Chichester, 1997, pp.~141--157.

\bibitem{Bjorstad-Koster-Krzyzanowski:HPC:2002}
{\sc P.~E. Bj{\o}rstad, J.~Koster, and P.~Krzy\.zanowski}, {\em Domain
  decomposition solvers for large scale industrial finite element problems}, in
  Applied Parallel Computing. New Paradigms for HPC in Industry and Academia,
  vol.~1947 of Lecture Notes in Computer Science, Springer-Verlag, 2001,
  pp.~373--383.

\bibitem{Bjorstad-Krzyzanowski:PPAM2001}
{\sc P.~E. Bj{\o}rstad and P.~Krzy\.zanowski}, {\em Flexible 2-level
  {N}eumann-{N}eumann method for structural analysis problems}, in Proceedings
  of the 4th International Conference on Parallel Processing and Applied
  Mathematics, PPAM2001 Naleczow, Poland, September 9-12, 2001, vol.~2328 of
  Lecture Notes in Computer Science, Springer-Verlag, 2002, pp.~387--394.

\bibitem{Braess:2007:FE}
{\sc D.~Braess}, {\em Finite elements}, Cambridge University Press, Cambridge,
  third~ed., 2007.
\newblock Theory, fast solvers, and applications in elasticity theory,
  Translated from the German by Larry L. Schumaker.

\bibitem{Brenner:2008:MTF}
{\sc S.~C. Brenner and L.~R. Scott}, {\em The mathematical theory of finite
  element methods}, vol.~15 of Texts in Applied Mathematics, Springer, New
  York, third~ed., 2008.

\bibitem{Calvo:2016:ACP}
{\sc J.~G. Calvo and O.~B. Widlund}, {\em An adaptive choice of primal
  constraints for {BDDC} domain decomposition algorithms}, Electron. Trans.
  Numer. Anal., 45 (2016), pp.~524--544.

\bibitem{Chartier:2003:SAMGe}
{\sc T.~Chartier, R.~D. Falgout, V.~E. Henson, J.~Jones, T.~Manteuffel,
  S.~McCormick, J.~Ruge, and P.~S. Vassilevski}, {\em Spectral {AMG}e
  ({$\rho$}{AMG}e)}, SIAM J. Sci. Comput., 25 (2003), pp.~1--26.

\bibitem{chung2016adaptive}
{\sc E.~Chung, Y.~Efendiev, and T.~Y. Hou}, {\em Adaptive multiscale model
  reduction with generalized multiscale finite element methods}, Journal of
  Computational Physics, 320 (2016), pp.~69--95.

\bibitem{Dolean:2012:ATL}
{\sc V.~Dolean, F.~Nataf, R.~Scheichl, and N.~Spillane}, {\em Analysis of a
  two-level schwarz method with coarse spaces based on local
  {D}irichlet-to-{N}eumann maps}, Comput. Methods Appl. Math., 12 (2012),
  pp.~391--414.

\bibitem{Dryja:2010:AAS}
{\sc M.~Dryja and M.~Sarkis}, {\em Additive average {S}chwarz methods for
  discretization of elliptic problems with highly discontinuous coefficients},
  Comput. Methods Appl. Math., 10 (2010), pp.~164--176.

\bibitem{Efendiev:2012:RTD}
{\sc Y.~Efendiev, J.~Galvis, R.~Lazarov, S.~Margenov, and J.~Ren}, {\em Robust
  two-level domain decomposition preconditioners for high-contrast anisotropic
  flows in multiscale media}, Comput. Methods Appl. Math., 12 (2012),
  pp.~415--436.

\bibitem{Efendiev:2012:RDD}
{\sc Y.~Efendiev, J.~Galvis, R.~Lazarov, and J.~Willems}, {\em Robust domain
  decomposition preconditioners for abstract symmetric positive definite
  bilinear forms}, ESAIM Math. Mod. Num. Anal., 46 (2012), pp.~1175--1199.

\bibitem{Feng:2002:AAS}
{\sc X.~Feng and T.~Rahman}, {\em An additive average {S}chwarz method for the
  plate bending problem}, J. Numer. Math., 10 (2002), pp.~109--125.

\bibitem{Galvis:2010:DDM}
{\sc J.~Galvis and Y.~Efendiev}, {\em Domain decomposition preconditioners for
  multiscale flows in high-contrast media}, Multiscale Model. Simul., 8 (2010),
  pp.~1461--1483.

\bibitem{Galvis:2010:DDMR}
\leavevmode\vrule height 2pt depth -1.6pt width 23pt, {\em Domain decomposition
  preconditioners for multiscale flows in high-contrast media: Reduced
  dimension coarse spaces}, Multiscale Model. Simul., 8 (2010), pp.~1621--1644.

\bibitem{atle2015harmonic}
{\sc M.~J. Gander, A.~Loneland, and T.~Rahman}, {\em Analysis of a new
  harmonically enriched multiscale coarse space for domain decompostion
  methods}, Submitted,  (2015).

\bibitem{kim2016bddc}
{\sc H.~H. Kim, E.~Chung, and J.~Wang}, {\em {BDDC} and {FETI-DP} algorithms
  with adaptive coarse spaces for three-dimensional elliptic problems with
  oscillatory and high contrast coefficients}, arXiv:1606.07560,  (2016).

\bibitem{kim2015bddc}
{\sc H.~H. Kim and E.~T. Chung}, {\em A {BDDC} algorithm with enriched coarse
  spaces for two-dimensional elliptic problems with oscillatory and high
  contrast coefficients}, Multiscale Modeling \& Simulation, 13 (2015),
  pp.~571--593.

\bibitem{klawonn:2016:adaptive}
{\sc A.~Klawonn, M.~Kühn, and O.~Rheinbach}, {\em Adaptive coarse spaces for
  {FETI-DP} in three dimensions}, SIAM Journal on Scientific Computing, 38
  (2016), pp.~A2880--A2911.

\bibitem{KRR:2015:FDMACS}
{\sc A.~Klawonn, P.~Radtke, and O.~Rheinbach}, {\em {FETI-DP} methods with an
  adaptive coarse space}, SIAM J. Numer. Anal., 53 (2015), pp.~297--320.

\bibitem{klawonn2016comparison}
\leavevmode\vrule height 2pt depth -1.6pt width 23pt, {\em A comparison of
  adaptive coarse spaces for iterative substructuring in two dimensions},
  Electronic Transactions on Numerical Analysis, 45 (2016), pp.~75--106.

\bibitem{Loneland:2016:AAS}
{\sc A.~Loneland, L.~Marcinkowski, and T.~Rahman}, {\em Additive average
  {S}chwarz method for a {C}rouzeix-{R}aviart finite volume element
  discretization of elliptic problems}, in Domain decomposition methods in
  science and engineering {XXII}, T.~Dickopf, M.~J. Gander, L.~Halpern,
  R.~Krause, and L.~F. Pavarino, eds., vol.~104 of Lect. Notes Comput. Sci.
  Eng., Springer, Berlin, 2016, pp.~587--594.

\bibitem{Loneland:2016:AASM}
\leavevmode\vrule height 2pt depth -1.6pt width 23pt, {\em Additive average
  {S}chwarz method for a {C}rouzeix-{R}aviart finite volume element
  discretization of elliptic problems with heterogeneous coefficients}, Numer.
  Math., 134 (2016), pp.~91--118.

\bibitem{mandel2007adaptive}
{\sc J.~Mandel and B.~Soused{\'\i}k}, {\em Adaptive selection of face coarse
  degrees of freedom in the {BDDC} and the {FETI-DP} iterative substructuring
  methods}, Computer methods in applied mechanics and engineering, 196 (2007),
  pp.~1389--1399.

\bibitem{Marcinkowski:2017:MCSAAS}
{\sc L.~Marcinkowski and T.~Rahman}, {\em Two new enriched multiscale coarse
  spaces for additive average {S}chwarz method}, in Domain decomposition
  methods in science and engineering {XXIII}, C.-O. Lee, X.-C. Cai, D.~Keyes,
  H.~Kim, A.~Klawonn, E.-J. Park, and O.~Widlund, eds., vol.~116 of Lect. Notes
  Comput. Sci. Eng., Springer, 2017, pp.~389--396.

\bibitem{Nataf:2011:CSC}
{\sc F.~Nataf, H.~Xiang, V.~Dolean, and N.~Spillane}, {\em A coarse space
  construction based on local {D}irichlet-to-{N}eumann maps}, SIAM J. Sci.
  Comput., 33 (2011), pp.~1623--1642.

\bibitem{oh:2016:BDDC}
{\sc D.-S. Oh, O.~B. Widlund, S.~Stefano~Zampini, and C.~R. Dohrmann}, {\em
  {BDDC} algorithms with deluxe scaling and adaptive selection of primal
  constraints for {R}aviart--{T}homas vector fields}, Tech. Rep. TR2015-978,
  Courant Institute of Mathematical Sciences, 2015.

\bibitem{pechstein2016unified}
{\sc C.~Pechstein and C.~R. Dohrmann}, {\em A unified framework for adaptive
  {BDDC}}, Tech. Rep. RICAM-Report 2016-20, Johann Radon Institute for
  Computational and Applied Mathematics, 2016.

\bibitem{Rahman:2011:SPCG}
{\sc T.~Rahman}, {\em {S}chwarz preconditioned cg algorithm for the mortar
  finite element}, Numer Algor, 58 (2011), pp.~235--260.

\bibitem{Rahman:2005:ASM}
{\sc T.~Rahman, X.~Xu, and R.~Hoppe}, {\em Additive {S}chwarz methods for the
  {C}rouzeix-{R}aviart mortar finite element for elliptic problems with
  discontinuous coefficients}, Numer. Math., 101 (2005), pp.~551--572.

\bibitem{Smith:1996:DDP}
{\sc B.~F. Smith, P.~E. Bj{\o}rstad, and W.~D. Gropp}, {\em Domain
  Decomposition: {P}arallel Multilevel Methods for Elliptic Partial
  Differential Equations}, Cambridge University Press, Cambridge, 1996.

\bibitem{sousedik2013adaptive}
{\sc B.~Soused{\'\i}k, J.~{\v{S}}{\'\i}stek, and J.~Mandel}, {\em
  Adaptive-multilevel {BDDC} and its parallel implementation}, Computing, 95
  (2013), pp.~1087--1119.

\bibitem{Spilane:2014:ARC}
{\sc N.~Spillane, V.~Dolean, P.~Hauret, F.~Nataf, C.~Pechstein, and
  R.~Scheichl}, {\em Abstract robust coarse spaces for systems of {PDE}s via
  generalized eigenproblems in the overlaps}, Numer. Math., 126 (2014),
  pp.~741--770.

\bibitem{Toselli:2005:DDM}
{\sc A.~Toselli and O.~Widlund}, {\em Domain decomposition methods---algorithms
  and theory}, vol.~34 of Springer Series in Computational Mathematics,
  Springer-Verlag, Berlin, 2005.

\end{thebibliography}

\end{document}